\def\argmin{\mathop{\text{arg\,min}}}
\def\argmax{\mathop{\text{arg\,max}}}
\let\hat\widehat
\newcommand{\by}{{\mathbf y}}
\newcommand{\bX}{{\mathbf X}}
\newcommand{\bY}{{\mathbf Y}}
\newcommand{\bb}{{\boldsymbol \beta}}
\newcommand{\ba}{{\boldsymbol \alpha}}
\newtheorem{lemma}{Lemma}
\newtheorem{theorem}{Theorem}
\newtheorem{remark}{Remark}
\title{A Smoothing Stochastic Gradient Method for Composite Optimization}
\author{
\begin{tabular}{c}
{Qihang Lin } \\[10pt]
\rm Tepper School of Business\\
\rm Carnegie Mellon University \\
\rm Pittsburgh, PA 15213
\end{tabular}
\begin{tabular}{c}
{Xi Chen } \\[10pt]
\rm School of Computer Science\\
\rm Carnegie Mellon University \\
\rm Pittsburgh, PA 15213
\end{tabular}
\begin{tabular}{c}
{Javier Pena } \\[10pt]
\rm Tepper School of Business\\
\rm Carnegie Mellon University \\
\rm Pittsburgh, PA 15213
\end{tabular}
}
\begin{document}
\maketitle

\begin{abstract}
We consider the unconstrained optimization problem whose objective function is composed of a smooth and a non-smooth conponents where the smooth component is the expectation a random function. This type of problem arises in some interesting applications in machine learning. We propose a stochastic gradient  descent algorithm for this class of optimization problem. When the non-smooth component has a particular structure, we propose another stochastic gradient  descent algorithm by incorporating a smoothing method into our first algorithm. The proofs of the convergence rates of these two algorithms are given and we show the numerical performance of our algorithm by applying them to regularized linear regression problems with different sets of synthetic data.  
\end{abstract}

\section{Introduction}
In the past decade, convex programming has been widely applied in a variety of areas including statistical estimation, machine learning, data mining and signal processing. One of the most popular classes of convex programming problems, which appears in many different applications such as lasso \cite{Tibshirani:96} and group lasso \cite{Yuan:06}, can be formulated as the following minimization problem, 
\begin{equation}
\label{obj}
\min_x\phi(x)\equiv f(x)+h(x).
\end{equation}
Here, the function $f(x)$ is smooth and convex and its gradient $\nabla f(x)$ is Lipschitz continuous with a Lipschitz constant $L$. The function $h(x)$ is assumed to be convex but non-smooth. 

Interior-point methods \cite{Boyd:03,SOCP:98} are considered as general algorithms for solving different types of convex programming. However, they are not scalable for problems with even moderate sizes due to the big cost of solving the Newton linear equations system in each main iteration. A block coordinate method was developed by Tseng and Yun \cite{Tseng:09} and applied to the problems which can be formulated by (\ref{obj}) in \cite{Lukas:08,Friedman:10}. However, this method requires a separable structure in the objective function that does not exist in some applications such as overlapped group lasso \cite{Bach:09}.

Recently, gradient descent methods, or so called first-order methods, e.g. \cite{Nesterov:03,Paul:08, Auslender:06}, have attracted great interest because they are not only relatively easy to implement but also capable of solving some challenging problems with huge size. For problems formulated by (\ref{obj}), the first-order methods proposed in \cite{Nesterov:03, Paul:08, Auslender:06} can achieve a $O(\frac{1}{N^2})$ convergence rate, where $N$ is the number of iterations. 
The different variations of gradient descent algorithm have been successively applied to different types of problems, for example, nuclear norm regularization \cite{Pong:10,Toh:09}, $\ell_1/\ell_2$-norm regularization \cite{Liu:09} and so on. 

In each loop of a gradient descent algorithm, a projection mapping, which itself is a minimization problem, must be solved in order to find the next intermediate solution. Although a projection mapping usually has a closed form solution which guarantees the efficiency of a gradient descent algorithm, there exists a class of problems formulated by (\ref{obj}) including overlapped group lasso \cite{Bach:09} and fused lasso \cite{Tibshirani:09}, for which a closed form solution for the projection mapping is not available. Fortunately, the non-smooth component $h(x)$ in the objective function usually has a particular type of structure of the form
\begin{equation}
\label{maxh}
h(x)=\max_{v\in Q}v^TAx,
\end{equation} 
where $Q$ is a compact set.
Nesterov \cite{Nesterov:05} proposed a scheme to construct a smooth approximation of the objective function $\phi(x)$ in (\ref{obj}) whenever $h(x)$ satisfies (\ref{maxh}) and also a gradient descent algorithm to minimize the approximated problem. 
This approximation scheme and algorithm have been applied to overlapped group lasso \cite{Chen:10a} and fused lasso \cite{Chen:10b} and given good numerical results.

A stochastic gradient descent algorithm can be considered as a gradient descent algorithm that utilizes random approximations of  gradients instead of exact gradients. During the past few years, a significant amount of work has been done to develop  stochastic gradient descent algorithms for different problems (see, e.g. \cite{Nemirovski:09,Lan:10a, Lan:10b, Hu:09}). One reason for people to consider stochastic gradient is that the exact gradients are computationally expensive or sometimes even impossible to evaluate. One typical case to consider stochastic gradients is stochastic optimization where the objective function $f(x)$ is given as an expectation of a random value function $F(x,\xi)$, i.e. $f(x)=\mathbb{E}F(x,\xi)$, where $\xi$ is a random variable and the expectation is taken over $\xi$. In this situation, a multidimensional numerical integral would be needed to compute the exact gradient $\nabla f(x)= \mathbb{E}\nabla F(x,\xi)$. That would be too time consuming especially when the dimension is high. It could be even worse if the distribution of $\xi$ is unknown so that there is no way to get the exact gradient  $\nabla f(x)$.

In this paper, we consider the optimization problem formulated by (\ref{obj}) but we further assume the smooth component of the objective function to be of the form $f(x)=\mathbb{E}F(x,\xi)$ just as it is the case in a stochastic optimization problem. Hence, we have to consider the stochastic gradient for the reasons we mentioned above. We propose a stochastic gradient descent algorithm to solve (\ref{obj}) under the assumption that a stochastic approximation of $\nabla f(x)$, denoted by $G(x,\xi)$, is available in each iteration, where $\xi$ is a random variable. We also assume $G(x,\xi)$ is an unbiased estimate of $\nabla f(x)$, i.e., $\mathbb{\mathbb{E}}G(x,\xi)=\nabla f(x)$, and $\mathbb{\mathbb{E}}\|\nabla f(x)-G(x,\xi)\|^2\leq\sigma^2$ for some nonnegative constant $\sigma$.\footnote{In this paper, the notation $\|\cdot\|$ without any subscript presents the Euclidean norm of a vector.} We show that our stochastic gradient algorithm obtain a convergence rate of $O(\frac{1}{\sqrt{N}})$, which is the same, up to a constant independent of $N$, as the convergence rates showed in \cite{Nemirovski:09, Lan:10a, Lan:10b, Hu:09} without assuming strongly convexity for the objective functions.

Our algorithms can viewed as an extension of the Algorithm 1 in \cite{Paul:08} by utilizing stochastic gradients. The choices of the parameters $\gamma_t$ and $\gamma^*$ in our algorithms are inspired by the choices of similar parameters in \cite{Lan:10a}. But our algorithm is different from Lan's accelerated stochastic approximation method in \cite{Lan:10a} in that they assumed $G(x,\xi)$ is a stochastic subgradient for the non-smooth objective function $\phi(x)$ in (\ref{obj}) while we assume $G(x,\xi)$ is a stochastic gradient only for the smooth component $f(x)$ in (\ref{obj}). Although our method is similar to a simplified version of AC-SA algorithm proposed in \cite{Lan:10b}, we use a different choice of parameters and focus more on the effect of smoothing technique in this paper. 

Similar to the exact gradient descent algorithm, the existing stochastic gradient descent algorithms, e.g. \cite{Lan:10b, Hu:09}, have to solve a projection mapping in each iteration, which may not have a closed form solution. However, when the function $h(x)$ in (\ref{obj}) has the structure given in (\ref{maxh}), we propose
another stochastic gradient descent algorithm by incorporating the smoothing technique proposed by Nesterov \cite{Nesterov:05}. This method replaces $h(x)$ by its smooth approximation such that the projection mapping always obtains a closed form solution. Hence, our method can be applied to problems like overlapped group lasso and fused lasso, which other stochastic gradient descent algorithm can not solve efficiently.

According to \cite{Nesterov:05,Lan:09}, the convergence rates of the accelerated gradient algorithms will be reduced from $O(\frac{1}{N^2})$ to $O(\frac{1}{N})$ if the smoothing technique in \cite{Nesterov:05} is applied. However, we show that the convergence rate for our stochastic gradient algorithm remains $O(\frac{1}{\sqrt{N}})$ even when the smoothing technique is applied. In other word, although the price of the smoothing technique is kind of high for deterministic gradient methods, it is totally free for stochastic gradient methods.

The rest of this paper is organized as follows: in the next section, we present our stochastic gradient descent algorithm
and prove its convergence rate. Combining our first algorithm with a smoothing technique, we propose another stochastic gradient descent algorithm in Section 3. In Section 4, we show the numerical results on simulated data, followed by a concluding section at the end.

\section{Stochastic Gradient Descent Algorithm}

In this section, we propose a stochastic gradient algorithm summerized in Algorithm \ref{algo:sgsmooth} below to solve the following optimization problem:
\begin{equation}
\label{sobj}
\min_x\phi(x)\equiv f(x)+h(x)=\mathbb{E}F(x,\xi)+h(x).
\end{equation}
where the expectation $\mathbb{E}$ is taken over the random variable $\xi$.
We assume that at every point $x$, there is a random vector $G(x,\xi)$ determined by $x$ and $\xi$ such that $\mathbb{E} G(x,\xi)=\nabla f(x)$ and $\mathbb{\mathbb{E}}\|\nabla f(x)-G(x,\xi)\|^2\leq\sigma^2$ for any $x$. In the $t$th step of Algorithm \ref{algo:sgsmooth}, we independently generate a random variable $\xi_t$ from the distribution of $\xi$ and compute $G(y_t,\xi_t)$ at a point $y_t$ based on $\xi_t$.

\begin{algorithm}[!th]
\caption{Stochastic Gradient Descent Algorithm (SG)}

\textbf{Input}: The total number of iterations $N$ and the Lipschitz constant $L$.

\textbf{Initialization}:  Choose $\theta_t=\frac{2}{2+t}$ and $\gamma_t=\frac{2}{t+2}(\frac{N^{\frac{3}{2}}}{L}+2)$. Set $x_0=0$, $z_0=0$ and $t=0$.

\textbf{Iterate} for $t=0,1,2,\ldots N$:
\begin{enumerate}
\item $y_t=(1-\theta_t)x_t+\theta_tz_t$
\item Generate a random vector $\xi_t$ independently from the distribution of $\xi$
\item $z_{t+1}=\argmin_x\{\left\langle x,G(y_t,\xi_t)\right\rangle +\frac{\gamma_t L}{2}\|x-z_t\|^2+h(x)\}$
\item $x_{t+1}=(1-\theta_t)x_t+\theta_tz_{t+1}$
\end{enumerate}

\textbf{Output}: $x_{N+1}$

\label{algo:sgsmooth}
\end{algorithm}

\begin{remark}
Algorithm \ref{algo:sgsmooth} is based on the Algorithm 1 proposed by Tseng in \cite{Paul:08}. Algorithm \ref{algo:sgsmooth} is different from Tseng's algorithm in two aspects. First, the exact gradient used in Tseng's algorithm is replaced by the stochastic gradient due to the difficulty of computing the exact gradient in our problems as mentioned in Section 1. Second, two sequences of step lengths, $\theta_t$ and $\gamma_t$, are maintained to guarantee the convergence of our algorithm while in Tseng's algorithm, one sequence of step length $\{\theta_t\}$ is enough. It should be pointed out that if we set $\gamma_t$ in Algorithm \ref{algo:sgsmooth} to be $\frac{2\gamma^*}{L(t+1)}$ with $\gamma^*=\max\left\{2L,\left[\frac{2\sigma^2N(N+1)(N+2)}{3\|x_0-x^*\|^2}\right]^{\frac{1}{2}}\right\}$, Algorithm 1 just becomes the AC-SA algorithm proposed by Ghadimi and Lan \cite{Lan:10b} for unconstrained optimization problems when $f(x)$ is just convex but not necessarily strongly convex. However, the parameter $\gamma^*$ in the AC-SA algorithm is hard to evaluate because it depends on the optimal solution $x^*$ and $\sigma$ while the parameters in our algorithm are relatively simple and  result in better numerical performances as shown in Section 4.
\end{remark}

Here, we assume the projection mapping in step 3 in Algorithm \ref{algo:sgsmooth} can be solved efficiently or has a closed form solution. This is true in many problems where the non-smooth term $h(x)$ is  $\ell_1$-norm \cite{Tibshirani:96}, $\ell_1/\ell_2$-norm  \cite{Liu:09,Yuan:06,Jacob:09} or nuclear norm of $x$ \cite{Pong:10,Toh:09}.

Using the same notations in Algorithm \ref{algo:sgsmooth}, we present the convergence rate of Algorithm \ref{algo:sgsmooth} in the following theorem. Some techniques in the proof are inspired by the proofs for the complexity results in \cite{Paul:08} and \cite{Lan:10a}.
\begin{theorem}
\label{smoothconveg}
Suppose $N$ is the total number of iterations in Algorithm \ref{algo:sgsmooth} and $x^*$ is the optimal solution of (\ref{sobj}) and we assume that the stochastic gradient $G(x,\xi)$ satisfies $\mathbb{\mathbb{E}}\|\nabla f(x)-G(x,\xi)\|^2\leq\sigma^2$ for all $x$. Then we have
\begin{equation*}
\mathbb{E}\left(\phi(x_{N+1})-\phi(x^*)\right)\leq\frac{2D^2+\sigma^2}{(N+2)^{\frac{1}{2}}}+L\frac{4D^2+2\sigma^2}{(N+2)^2},
\end{equation*}
where  $D=\|x^*-z_0\|$.
\end{theorem}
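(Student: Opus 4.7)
The plan is to follow the strategy used for accelerated proximal gradient methods, in particular Tseng's Algorithm~1 in \cite{Paul:08}, while carefully quarantining the stochastic error $\delta_t := G(y_t,\xi_t) - \nabla f(y_t)$, which is mean zero and satisfies $\mathbb{E}\|\delta_t\|^2 \leq \sigma^2$. The first step is to derive a one-step inequality for $\phi(x_{t+1}) - \phi(x^*)$. I would combine four standard ingredients: (i) $L$-smoothness of $f$ together with the identity $x_{t+1}-y_t = \theta_t(z_{t+1}-z_t)$; (ii) convexity of $f$ applied at $x_t$ and $x^*$ with $(1-\theta_t)x_t + \theta_t z_t = y_t$; (iii) convexity of $h$ applied to $x_{t+1} = (1-\theta_t)x_t + \theta_t z_{t+1}$; and (iv) the three-point inequality arising from the first-order optimality of $z_{t+1}$ in the proximal subproblem. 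Writing $\nabla f(y_t) = G(y_t,\xi_t) - \delta_t$ and assembling these produces, with $B_t := \|x^*-z_t\|^2$,
\[
\phi(x_{t+1}) - \phi(x^*) \leq (1-\theta_t)(\phi(x_t)-\phi(x^*)) - \theta_t\langle \delta_t, z_{t+1}-x^*\rangle + \tfrac{\theta_t\gamma_t L}{2}(B_t - B_{t+1}) - \tfrac{L\theta_t(\gamma_t-\theta_t)}{2}\|z_{t+1}-z_t\|^2,
\]
and the last term is non-positive because $\gamma_t = \theta_t(N^{3/2}/L + 2) > \theta_t$.

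Next I would handle the noise before taking expectations, since $z_{t+1}$ depends on $\delta_t$. I split
\[
-\theta_t\langle \delta_t, z_{t+1}-x^*\rangle = -\theta_t\langle \delta_t, z_t-x^*\rangle - \theta_t\langle \delta_t, z_{t+1}-z_t\rangle,
\]
the first summand having zero conditional expectation given the history through step $t$. For the second, Young's inequality with parameter $\alpha = L\theta_t(\gamma_t-\theta_t)$ produces a noise contribution bounded by $\frac{\theta_t\|\delta_t\|^2}{2L(\gamma_t-\theta_t)}$ plus a quadratic that exactly cancels the $-\frac{L\theta_t(\gamma_t-\theta_t)}{2}\|z_{t+1}-z_t\|^2$ above. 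Because $\theta_t/(\gamma_t-\theta_t) = L/(N^{3/2}+L)$ is constant in $t$, taking full expectations yields the clean recursion
\[
\mathbb{E}[\phi(x_{t+1})-\phi(x^*)] \leq (1-\theta_t)\mathbb{E}[\phi(x_t)-\phi(x^*)] + \tfrac{\theta_t\gamma_t L}{2}\mathbb{E}[B_t - B_{t+1}] + \tfrac{\sigma^2}{2(N^{3/2}+L)}, \quad t\geq 1.
\]
The case $t=0$ must be handled separately because $\theta_0 = 1$; the same argument gives $\mathbb{E}[\phi(x_1)-\phi(x^*)] \leq \tfrac{N^{3/2}+2L}{2}(D^2 - \mathbb{E} B_1) + \tfrac{\sigma^2}{2(N^{3/2}+L)}$.

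The final step is aggregation. Substituting $1-\theta_t = t/(t+2)$ and $\theta_t\gamma_t L = 4(N^{3/2}+2L)/(t+2)^2$ and multiplying the recursion through by $(t+2)^2$, the arithmetic identity $(t+1)^2 \geq t(t+2)$ converts the one-step bound into a telescoping chain
\[
(t+2)^2\mathbb{E}[\phi(x_{t+1})-\phi(x^*)] \leq (t+1)^2\mathbb{E}[\phi(x_t)-\phi(x^*)] + 2(N^{3/2}+2L)\mathbb{E}[B_t - B_{t+1}] + \tfrac{(t+2)^2\sigma^2}{2(N^{3/2}+L)}.
\]
Summing over $t=1,\dots,N$ collapses the $\mathbb{E}[\phi(x_t)-\phi(x^*)]$ chain and telescopes the $B$-differences to at most $2(N^{3/2}+2L)\mathbb{E} B_1$, while the $t=0$ estimate allows $4\mathbb{E}[\phi(x_1)-\phi(x^*)] + 2(N^{3/2}+2L)\mathbb{E} B_1$ to be absorbed into $2(N^{3/2}+2L)D^2$ plus a small $\sigma^2$ constant. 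Dividing by $(N+2)^2$ and bounding $\sum_{t=1}^N (t+2)^2 \leq \frac{(N+2)(N+3)(2N+5)}{6}$ produces the stated estimate after matching constants.

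The hardest part will be identifying the correct multiplier $(t+2)^2$ in the aggregation step: it is the unique polynomial factor that simultaneously converts the $(1-\theta_t)$-type recursion into a clean telescoping via $(t+1)^2 \geq t(t+2)$ \emph{and} renders the coefficient of $\mathbb{E}[B_t - B_{t+1}]$ constant in $t$. Once that is spotted, the calibration of Young's parameter and the cancellation of the $\|z_{t+1}-z_t\|^2$ term follow mechanically from the algorithm's choice of $\gamma_t$ proportional to $\theta_t$.
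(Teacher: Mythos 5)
Your proposal is correct and follows essentially the same route as the paper: the same one-step bound from $L$-smoothness, convexity, and the three-point inequality for the proximal step; the same split of the noise term into a martingale part $\langle \delta_t, z_t-x^*\rangle$ and a part absorbed into the $\|z_{t+1}-z_t\|^2$ term via Young's inequality; and the same telescoping, since multiplying by $(t+2)^2$ is just a constant multiple of the paper's division by $\theta_t\gamma_t$ (with your $(t+1)^2\geq t(t+2)$ playing the role of the paper's Lemma 1). Your explicit separate treatment of $t=0$ is slightly more careful than the paper, which lets $1-\theta_0=0$ handle it implicitly, but the argument is the same.
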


Three technical lemmas are presented here before the proof of the convergence rate of Algorithm \ref{algo:sgsmooth} is given. Lemma \ref{step} is an inequality satisfied by the step lengths we choose in Algorithm \ref{algo:sgsmooth} and Lemma \ref{ineq} is a basic property of convex functions. Lemma \ref{minzer}, shown in \cite{Lan:10a}, is a technical result used to characterize the optimal solution of the projection mapping in step 3 of Algorithm 1. We put the proof of Lemma 3 here just for the completeness.

\begin{lemma}
\label{step}
Suppose the sequences $\{\theta_{t}\}$ and $\{\gamma_{t}\}$ are chosen as in Algorithm \ref{algo:sgsmooth}, we have
$\frac{1-\theta_{t+1}}{\theta_{t+1}\gamma_{t+1}}\leq \frac{1}{\theta_{t}\gamma_{t}}$ and $\gamma_t>\theta_t$. 
\end{lemma}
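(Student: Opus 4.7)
The plan is to verify both inequalities by direct substitution of the closed-form expressions for $\theta_t$ and $\gamma_t$ given in Algorithm \ref{algo:sgsmooth}; no clever estimates are needed.

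For the second inequality $\gamma_t > \theta_t$, I would simply factor out the common $\frac{2}{t+2}$ and observe that $\gamma_t / \theta_t = \frac{N^{3/2}}{L} + 2$, which is strictly greater than $1$ since $L$ and $N$ are positive. This part is essentially a one-line check.

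For the first inequality $\frac{1-\theta_{t+1}}{\theta_{t+1}\gamma_{t+1}} \le \frac{1}{\theta_t \gamma_t}$, I would introduce the shorthand $C = \frac{N^{3/2}}{L} + 2$ so that $\theta_t \gamma_t = \frac{4C}{(t+2)^2}$ and hence $\frac{1}{\theta_t \gamma_t} = \frac{(t+2)^2}{4C}$. On the other side, using $1-\theta_{t+1} = \frac{t+1}{t+3}$, $\theta_{t+1} = \frac{2}{t+3}$, and $\gamma_{t+1} = \frac{2C}{t+3}$, one gets $\frac{1-\theta_{t+1}}{\theta_{t+1}\gamma_{t+1}} = \frac{(t+1)(t+3)}{4C}$. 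The desired inequality then reduces to the elementary fact $(t+1)(t+3) \le (t+2)^2$, which follows from expanding both sides to $t^2+4t+3$ and $t^2+4t+4$.

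There is no real obstacle in this proof — it is a routine algebraic verification chosen precisely so that the parameter schedule satisfies the recurrence needed later in the convergence analysis. The only thing to be slightly careful about is keeping the index shift between $t$ and $t+1$ straight when substituting into $\gamma_{t+1}$ and $\theta_{t+1}$, and noting that the common factor $\frac{1}{4C}$ cancels on both sides so the constant $C$ (and thus $N$ and $L$) plays no role in the first inequality.
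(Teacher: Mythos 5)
Your proof is correct and follows essentially the same route as the paper: both arguments reduce the first inequality to the elementary fact $(t+1)(t+3)\leq (t+2)^2$ (the paper phrases it as $\frac{t+1}{t+2}\leq\frac{t+2}{t+3}=\frac{\gamma_{t+1}}{\gamma_t}$ after rearranging, while you substitute directly and cancel the common factor $\frac{1}{4C}$), and both dispose of $\gamma_t>\theta_t$ by noting the ratio is $\frac{N^{3/2}}{L}+2>1$.
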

\begin{proof}
The first inequality comes from
\begin{equation*}
\frac{\theta_{t}(1-\theta_{t+1})}{\theta_{t+1}}=\frac{\frac{2}{t+2}(1-\frac{2}{t+3})}{\frac{2}{t+3}}=\frac{t+1}{t+2}\leq \frac{t+2}{t+3}=\frac{\gamma_{t+1}}{\gamma_{t}}.\\
\end{equation*}
The second one is obvious.
\end{proof}

\begin{lemma}
\label{ineq}
If $f(x)$ is a smooth convex function on $\mathbb{R}^n$ and $\nabla f(x)$ is Lipschitz continuous with a Lipschitz constant $L$, then we have
\begin{equation*}
f(y)+\left\langle x-y,\nabla f(y)\right\rangle\leq f(x) \leq f(y)+\left\langle x-y,\nabla f(y)\right\rangle +\frac{L}{2}\|x-y\|^2
\end{equation*}
for all $x,y$.
\end{lemma}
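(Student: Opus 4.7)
The plan is to prove the two inequalities separately, since the left one uses only convexity of $f$ while the right one uses only the Lipschitz continuity of $\nabla f$ (smoothness plus convexity is not actually needed for the upper bound).

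For the left inequality, I would invoke the first-order characterization of convexity. Concretely, for any $t \in (0,1)$ convexity gives $f(y + t(x-y)) \le (1-t)f(y) + t f(x)$, which rearranges to
\begin{equation*}
f(x) - f(y) \;\ge\; \frac{f(y + t(x-y)) - f(y)}{t}.
\end{equation*}
Letting $t \downarrow 0$, the right-hand side converges to the directional derivative of $f$ at $y$ in direction $x-y$, which by differentiability equals $\langle x - y, \nabla f(y)\rangle$. This yields the desired lower bound $f(x) \ge f(y) + \langle x-y, \nabla f(y)\rangle$.

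For the right inequality, I would use the fundamental theorem of calculus along the segment from $y$ to $x$. Define $\varphi(t) = f(y + t(x-y))$ for $t \in [0,1]$; then $\varphi'(t) = \langle x - y, \nabla f(y + t(x-y))\rangle$ and
\begin{equation*}
f(x) - f(y) = \int_0^1 \langle x - y, \nabla f(y + t(x-y))\rangle\, dt.
\end{equation*}
Subtracting $\langle x-y, \nabla f(y)\rangle = \int_0^1 \langle x-y, \nabla f(y)\rangle\, dt$ from both sides and applying Cauchy--Schwarz followed by the Lipschitz bound $\|\nabla f(y + t(x-y)) - \nabla f(y)\| \le L t \|x-y\|$ gives
\begin{equation*}
f(x) - f(y) - \langle x-y, \nabla f(y)\rangle \;\le\; \int_0^1 L t \|x-y\|^2\, dt \;=\; \frac{L}{2}\|x-y\|^2.
\end{equation*}

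There is no real obstacle here; both inequalities are classical. The only minor subtlety is justifying the limit $t \downarrow 0$ in the convexity argument (which follows from differentiability of $f$) and the interchange implicit in applying the fundamental theorem of calculus to $\varphi$ (which follows from continuity of $\nabla f$, guaranteed by Lipschitz continuity). Both steps are standard and routine.
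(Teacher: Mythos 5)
Your proof is correct. Note that the paper does not actually prove this lemma at all---it states it as a classical property of convex functions and defers to a textbook reference---so there is no in-paper argument to compare against; your two-part proof (the difference-quotient limit from the first-order characterization of convexity for the lower bound, and the fundamental theorem of calculus along the segment combined with Cauchy--Schwarz and the Lipschitz estimate for the upper bound) is exactly the standard argument one would find in that reference, and your side remark that the upper bound needs only Lipschitz continuity of $\nabla f$, not convexity, is also accurate.
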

This is a classical property of convex functions. For a proof, see \cite{Conv:01}.

\begin{lemma}
\label{minzer}(See also \cite{Paul:08}, \cite{Lan:10a} and \cite{Lan:10b}) 
Suppose $\psi(x)$ is convex and $z^*$ is the optimal solution of $\min_z\psi(z)+\frac{1}{2}\|z-\hat{z}\|^2$, then we have the following inequality:
\begin{equation*}
\psi(z^*)+\frac{1}{2}\|z^*-\hat{z}\|^2\leq \psi(x)+\frac{1}{2}\|x-\hat{z}\|^2-\frac{1}{2}\|x-z^*\|^2
\end{equation*}
for all $x$.
\end{lemma}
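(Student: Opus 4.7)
The plan is to exploit the fact that the function being minimized in the definition of $z^*$, namely $\Phi(z) := \psi(z) + \tfrac{1}{2}\|z-\hat{z}\|^2$, is $1$-strongly convex. Strong convexity comes for free from the quadratic term since $\|\cdot - \hat{z}\|^2$ has Hessian $2I$, and adding a (merely) convex $\psi$ preserves strong convexity with the same modulus. The desired three-point inequality then is exactly the standard ``strong-convexity-at-the-minimizer'' inequality $\Phi(x) \geq \Phi(z^*) + \tfrac{1}{2}\|x-z^*\|^2$, so once strong convexity is in hand the conclusion is immediate by rearranging.

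More concretely, first I would verify $1$-strong convexity of $\Phi$ by writing, for any $x,z$ and $\lambda \in [0,1]$, the identity
\begin{equation*}
\|\lambda x + (1-\lambda)z - \hat{z}\|^2 = \lambda\|x-\hat{z}\|^2 + (1-\lambda)\|z-\hat{z}\|^2 - \lambda(1-\lambda)\|x-z\|^2,
\end{equation*}
and combining it with the usual convexity of $\psi$. Then, since $z^*$ is the unconstrained minimizer of the strongly convex $\Phi$, I would apply the standard consequence that $\Phi(x) \geq \Phi(z^*) + \tfrac{1}{2}\|x-z^*\|^2$ for every $x$. Substituting the definition of $\Phi$ and moving the $\tfrac{1}{2}\|x-z^*\|^2$ term to the other side produces exactly the stated inequality.

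If I preferred a subgradient-calculus route instead, I would use the first-order optimality condition $\hat{z} - z^* \in \partial \psi(z^*)$, plug this into the subgradient inequality $\psi(x) \geq \psi(z^*) + \langle \hat{z}-z^*,\, x-z^*\rangle$, and then resolve the inner product via the polarization identity $2\langle \hat{z}-z^*,\, x-z^*\rangle = \|x-\hat{z}\|^2 - \|x-z^*\|^2 - \|z^*-\hat{z}\|^2$ wait reversed sign check: expanding $\|x-\hat{z}\|^2 = \|(x-z^*)-(\hat{z}-z^*)\|^2$ gives the identity with the correct signs so that the $\psi(z^*) + \tfrac{1}{2}\|z^*-\hat{z}\|^2$ piece appears on the left and $\psi(x) + \tfrac{1}{2}\|x-\hat{z}\|^2 - \tfrac{1}{2}\|x-z^*\|^2$ on the right after rearrangement.

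There is no real obstacle here; the only thing to watch is the sign bookkeeping in the polarization identity (or equivalently, making sure the quadratic term's modulus of strong convexity is $1$ rather than $2$, which matches the $\tfrac{1}{2}$ coefficient in the statement). Because $\psi$ is only assumed convex (not differentiable), if I take the optimality-condition route I must phrase it in terms of a subgradient $\hat{z} - z^* \in \partial \psi(z^*)$ rather than a gradient; the strong-convexity route sidesteps this subtlety entirely and is the one I would write up.
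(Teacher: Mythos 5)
Your proposal is correct, and your primary route is genuinely different from the paper's. The paper argues via the first-order optimality condition in variational-inequality form --- it picks a subgradient $\eta\in\partial\psi(z^*)$ with $\langle \eta+z^*-\hat{z},\,x-z^*\rangle\geq 0$, combines this with the subgradient inequality for $\psi$, and then expands the three-point identity $\frac{1}{2}\|\hat{z}-x\|^2=\frac{1}{2}\|\hat{z}-z^*\|^2+\langle z^*-\hat{z},x-z^*\rangle+\frac{1}{2}\|z^*-x\|^2$. You instead observe that $\Phi(z)=\psi(z)+\frac{1}{2}\|z-\hat{z}\|^2$ is $1$-strongly convex and invoke quadratic growth at the minimizer, $\Phi(x)\geq\Phi(z^*)+\frac{1}{2}\|x-z^*\|^2$, which is literally the claimed inequality after unpacking $\Phi$. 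Your route is cleaner and, if you establish quadratic growth by the elementary limiting argument (apply the strong-convexity inequality to $\lambda x+(1-\lambda)z^*$, use $\Phi(z^*)\leq\Phi(\lambda x+(1-\lambda)z^*)$, divide by $\lambda$ and let $\lambda\to 0^+$), it avoids subgradient calculus entirely; the paper's route is slightly more general in spirit since the variational-inequality form extends directly to constrained projections. Your secondary subgradient route, with $\hat{z}-z^*\in\partial\psi(z^*)$ and the polarization identity $2\langle\hat{z}-z^*,x-z^*\rangle=\|x-z^*\|^2+\|\hat{z}-z^*\|^2-\|x-\hat{z}\|^2$, is essentially the paper's proof specialized to the unconstrained case where $\eta=\hat{z}-z^*$; just make sure you fix the sign you second-guessed, since your first attempt at the identity had it reversed. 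In the final write-up, do state and justify the quadratic-growth consequence rather than only citing it as standard, since it is the entire content of the lemma.
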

\begin{proof}
The definition of $z^*$ implies that there exists a subgradient $\eta$ in $\partial \psi(z^*)$, the subdifferential of 
function $\psi(z)$ at $z^*$, such that

\begin{equation}
\label{alemma3}
\left\langle \eta+z^*-\hat{z},x-z^*\right\rangle\geq 0\mbox{ for all } x.
\end{equation}
And the convexity of $\psi(x)$ implies
\begin{equation}
\label{blemma3}
\psi(x)
\geq\psi(z^*)+\left\langle \eta,x-z^*\right\rangle \mbox{ for all } x.
\end{equation}
It is easy to verify that
\begin{equation}
\label{clemma3}
\frac{1}{2}\|\hat{z}-x\|^2=\frac{1}{2}\|\hat{z}-z^*\|^2+\left\langle z^*-\hat{z},x-z^*\right\rangle+\frac{1}{2}\|z^*-x\|^2 \mbox{ for all } x.
\end{equation}
Using the (\ref{alemma3})(\ref{blemma3})(\ref{clemma3}) above, we conclude that
\begin{eqnarray*}
\psi(x)+\frac{1}{2}\|x-\hat{z}\|^2&=&\psi(x)+\frac{1}{2}\|\hat{z}-z^*\|^2+\left\langle z^*-\hat{z},x-z^*\right\rangle+\frac{1}{2}\|z^*-x\|^2\\
&\geq&\psi(z^*)+\frac{1}{2}\|\hat{z}-z^*\|^2+\left\langle \eta+z^*-\hat{z},x-z^*\right\rangle+\frac{1}{2}\|z^*-x\|^2\\
&\geq&\psi(z^*)+\frac{1}{2}\|\hat{z}-z^*\|^2+\frac{1}{2}\|z^*-x\|^2\mbox{ for all } x.\\
\end{eqnarray*}
\end{proof}

Here, we give the proof of Theorem \ref{smoothconveg}.
\begin{proof}
We define $\Delta_t=\nabla f(y_t)-G(y_t,\xi_t)$ so that $\mathbb{E}\Delta_t=0$ and $\mathbb{\mathbb{E}}\|\Delta_t\|^2\leq\sigma^2$. We can bound $\phi(x_{t+1})$ from above as follows.
\begin{eqnarray}\nonumber
\phi(x_{t+1})&=&f(x_{t+1})+h(x_{t+1})\\\nonumber
&\leq& f(y_t)+\left\langle x_{t+1}-y_t, \nabla f(y_t)\right\rangle+\frac{L}{2}\|x_{t+1}-y_{t}\|^2+h(x_{t+1})\\\nonumber
&\leq&(1-\theta_t)(f(y_{t})+\left\langle x_{t}-y_t, \nabla f(y_t)\right\rangle+h(x_t))+\\\nonumber
&&\theta_t(f(y_{t})+\left\langle z_{t+1}-y_t, \nabla f(y_t)\right\rangle+h(z_{t+1}))+\theta_t^2\frac{L}{2}\|z_{t+1}-z_{t}\|^2\\\nonumber
&\leq&(1-\theta_t)\phi(x_{t})+\theta_t(f(y_{t})+\left\langle z_{t+1}-y_t, \nabla f(y_t)\right\rangle+h(z_{t+1}))\\\nonumber
&&+\theta_t^2\frac{L}{2}\|z_{t+1}-z_{t}\|^2\\\nonumber
&=&(1-\theta_t)\phi(x_{t})+\theta_t(f(y_{t})+\left\langle z_{t+1}-y_t, G(y_t,\xi_t)\right\rangle+h(z_{t+1})+\gamma_t\frac{L}{2}\|z_{t+1}-z_{t}\|^2)\\\label{ineq1}
&&+(\theta_t^2-\theta_t\gamma_t)\frac{L}{2}\|z_{t+1}-z_{t}\|^2+\theta_t\left\langle z_{t+1}-y_t,\Delta_t\right\rangle\\\nonumber
\end{eqnarray}
The first and third inequalities above are due to Lemma \ref{ineq} and the second one is implied by the updating equations for $y_t$ and $x_{t+1}$ and the convexity of $h(x)$.

According to Lemma \ref{minzer} with $\psi(z)=\frac{1}{\gamma_t L}(\left\langle z,G(y_t,\xi_t)\right\rangle+h(z))$, $z^*=z_{t+1}$ and $\hat{z}=z_t$, we get 
\begin{eqnarray}
\label{applylemma}
&&(\left\langle z_{t+1},G(y_t,\xi_t)\right\rangle+h(z_{t+1}))+\frac{\gamma_t L}{2}\|z_{t+1}-z_t\|^2\\\nonumber
&\leq& (\left\langle x,G(y_t,\xi_t)\right\rangle+h(x))+\frac{\gamma_t L}{2}\|x-z_t\|^2-\frac{\gamma_t L}{2}\|x-z_{t+1}\|^2 \mbox{ for all } x.
\end{eqnarray}

By choosing $x=x^*$ in (\ref{applylemma}), it follows from (\ref{ineq1})  and (\ref{applylemma}) that 
\begin{eqnarray}\nonumber
\phi(x_{t+1})&\leq&(1-\theta_t)\phi(x_{t})+\theta_t(f(y_{t})+\left\langle x^*-y_t,  G(y_t,\xi_t)\right\rangle+h(x^*)+\gamma_t\frac{L}{2}\|x^*-z_{t}\|^2)\\\nonumber
&&-\theta_t\gamma_t\frac{L}{2}\|x^*-z_{t+1}\|^2+(\theta_t^2-\theta_t\gamma_t)\frac{L}{2}\|z_{t+1}-z_{t}\|^2+\theta_t\left\langle z_{t+1}-y_t,\Delta_t\right\rangle\\\nonumber
&=&(1-\theta_t)\phi(x_{t})+\theta_t(f(y_{t})+\left\langle x^*-y_t,  \nabla f(y_t)\right\rangle+h(x^*)+\gamma_t\frac{L}{2}\|x^*-z_{t}\|^2)\\\nonumber
&&-\theta_t\gamma_t\frac{L}{2}\|x^*-z_{t+1}\|^2+(\theta_t^2-\theta_t\gamma_t)\frac{L}{2}\|z_{t+1}-z_{t}\|^2+\theta_t\left\langle z_{t+1}- x^*,\Delta_t\right\rangle.\\\label{tempineq}
\end{eqnarray}
Here, the equality above holds because $\Delta_t=\nabla f(y_t)-G(y_t,\xi_t)$.

By Lemma \ref{ineq}, the term $f(y_{t})+\left\langle x^*-y_t,  \nabla f(y_t)\right\rangle+h(x^*)$ in (\ref{tempineq}) is no more than $\phi(x^*)$. Hence, we can upper bound $\phi(x_{t+1})$ as:
\begin{eqnarray*}
\phi(x_{t+1})&\leq&(1-\theta_t)\phi(x_{t})+\theta_t\phi(x^*)+\theta_t\gamma_t\frac{L}{2}\|x^*-z_{t}\|^2-\theta_t\gamma_t\frac{L}{2}\|x^*-z_{t+1}\|^2\\
&&-(\theta_t\gamma_t-\theta_t^2)\frac{L}{2}\|z_{t+1}-z_{t}\|^2+\theta_t\left\langle z_{t+1}-y_t,\Delta_t\right\rangle-\theta_t\left\langle x^*-y_t,\Delta_t\right\rangle\\
&=&(1-\theta_t)\phi(x_{t})+\theta_t\phi(x^*)+\theta_t\gamma_t\frac{L}{2}\|x^*-z_{t}\|^2-\theta_t\gamma_t\frac{L}{2}\|x^*-z_{t+1}\|^2\\
&&-(\theta_t\gamma_t-\theta_t^2)\frac{L}{2}\|z_{t+1}-z_{t}\|^2+\theta_t\left\langle z_{t+1}-z_{t},\Delta_t\right\rangle+\theta_t\left\langle z_{t}-x^*,\Delta_t\right\rangle\\
&\leq&(1-\theta_t)\phi(x_{t})+\theta_t\phi(x^*)+\theta_t\gamma_t\frac{L}{2}\|x^*-z_{t}\|^2-\theta_t\gamma_t\frac{L}{2}\|x^*-z_{t+1}\|^2\\
&&-(\theta_t\gamma_t-\theta_t^2)\frac{L}{2}\|z_{t+1}-z_{t}\|^2+\theta_t \|z_{t+1}-z_{t}\|\|\Delta_t\|+\theta_t\left\langle z_{t}-x^*,\Delta_t\right\rangle\\
&\leq&(1-\theta_t)\phi(x_{t})+\theta_t\phi(x^*)+\theta_t\gamma_t\frac{L}{2}\|x^*-z_{t}\|^2-\theta_t\gamma_t\frac{L}{2}\|x^*-z_{t+1}\|^2\\
&&+\frac{\theta_t\|\Delta_t\|^2}{2L(\gamma_t-\theta_t)}+\theta_t\left\langle z_{t}-x^*,\Delta_t\right\rangle.\\
\end{eqnarray*}
We get the second inequality above by applying Cauchy-Schwarz inequality to $\left\langle z_{t+1}-z_{t},\Delta_t\right\rangle$ and the last inequality comes from applying the inequality $-ax^2+bx\leq\frac{b^2}{4a}$ with $a>0$ to $a=(\theta_t\gamma_t-\theta_t^2)$, $x=\|z_{t+1}-z_{t}\|$ and $b=\theta_t \|\nabla_t\|$. Note that $(\theta_t\gamma_t-\theta_t^2)>0$ from Lemma 1.

Until now, we have already got
\begin{eqnarray}
\label{tempineq2}
\phi(x_{t+1})&\leq&(1-\theta_t)\phi(x_{t})+\theta_t\phi(x^*)+\theta_t\gamma_t\frac{L}{2}\|x^*-z_{t}\|^2-\theta_t\gamma_t\frac{L}{2}\|x^*-z_{t+1}\|^2\\\nonumber
&&+\frac{\theta_t\|\Delta_t\|^2}{2L(\gamma_t-\theta_t)}+\theta_t\left\langle z_{t}-x^*,\Delta_t\right\rangle.\\\nonumber
\end{eqnarray}

We define $\mathbb{E}(\left\langle z_t-x^*,\Delta_t\right\rangle|\xi_1,\dots,\xi_{t-1})$  to be the conditional expectation of $\left\langle z_t-x^*,\Delta_t\right\rangle$ under the condition that $\xi_1,\dots,\xi_{t-1}$ have been generated. According to Algorithm \ref{algo:sgsmooth}, $z_t$ is only determined by $\xi_1,\dots,\xi_{t-1}$ but not by $\xi_t$. Hence,  $\mathbb{E}(\left\langle z_t-x^*,\Delta_t\right\rangle|\xi_1,\dots,\xi_{t-1})=0$ because $\mathbb{E}\Delta_t=0$. By the iterative property of expectation, we have 
\begin{equation*}
\mathbb{E}\left\langle z_t-x^*,\Delta_t\right\rangle=\mathbb{E}(\mathbb{E}(\left\langle z_t-x^*,\Delta_t\right\rangle|\xi_1,\dots,\xi_{t-1}))=\mathbb{E}0=0.
\end{equation*}

Hence, if we subtract $\phi(x*)$ from both sides of inequality (\ref{tempineq2}) and take the expectation, we will have
\begin{eqnarray}
\label{tempineq3}
&&\mathbb{E}(\phi(x_{t+1})-\phi(x^*))\\\nonumber
&\leq&(1-\theta_t)(\mathbb{E}(\phi(x_{t}))-\phi(x^*))+\theta_t\gamma_t\frac{L}{2}\mathbb{E}\|x^*-z_{t}\|^2-\theta_t\gamma_t\frac{L}{2}\mathbb{E}\|x^*-z_{t+1}\|^2\\\nonumber
&&+\frac{\theta_t\sigma^2}{2L(\gamma_t-\theta_t)}.\\\nonumber
\end{eqnarray}
Moreover, we divide both sides of inequality (\ref{tempineq3}) by $\theta_t\gamma_t$ and get
\begin{eqnarray}
\label{tempineq1}
&&\frac{1}{\theta_t\gamma_t}(\mathbb{E}(\phi(x_{t+1}))-\phi(x^*))\\\nonumber
&\leq&\frac{1-\theta_t}{\theta_t\gamma_t}(\mathbb{E}(\phi(x_{t}))-\phi(x^*))+\frac{L}{2}\mathbb{E}\|x^*-z_{t}\|^2-\frac{L}{2}\mathbb{E}\|x^*-z_{t+1}\|^2+\frac{\sigma^2}{2L\gamma_t(\gamma_t-\theta_t)}\\\nonumber
&\leq&\frac{1}{\theta_{t-1}\gamma_{t-1}}(\mathbb{E}(\phi(x_{t}))-\phi(x^*))+\frac{L}{2}\mathbb{E}\|x^*-z_{t}\|^2-\frac{L}{2}\mathbb{E}\|x^*-z_{t+1}\|^2+\frac{\sigma^2}{2L\gamma_t(\gamma_t-\theta_t)},\\\nonumber
\end{eqnarray}
where the second inequality comes form Lemma \ref{step}.

By applying inequality (\ref{tempineq1}) recursively, we obtain
\begin{equation}
\label{ineqrecursive}
\frac{1}{\theta_N\gamma_N}(\mathbb{E}(\phi(x_{N+1}))-\phi(x^*))\leq\frac{L}{2}\mathbb{E}\|x^*-z_{0}\|^2+\frac{\sigma^2}{2L}\sum_{t=0}^N\frac{1}{\gamma_t(\gamma_t-\theta_t)}.\\
\end{equation}

The definitions of $\theta_t$ and $\gamma_t$ imply
\begin{eqnarray*}
&&\sum_{t=0}^N\frac{1}{\gamma_t(\gamma_t-\theta_t)}=\sum_{t=0}^N\frac{(t+2)^2}{4(N^{\frac{3}{2}}/L+2)(N^{\frac{3}{2}}/L+1)}
\leq\frac{(N+2)(N+3)(2N+5)L^2}{24N^3}\\
&\leq&\frac{12N^3L^2}{24N^3}=\frac{L^2}{2},\\
\end{eqnarray*}
which, together with inequality (\ref{ineqrecursive}), implies:
\begin{eqnarray*}
&&\mathbb{E}(\phi(x_{N+1}))-\phi(x^*)\\
&\leq&\theta_N\gamma_N(\frac{L}{2}D^2+\frac{\sigma^2L}{4})\\
&\leq&\frac{4}{(N+2)^2}\frac{N^{\frac{3}{2}}}{L}(\frac{L}{2}D^2+\frac{\sigma^2L}{4})+\frac{8}{(N+2)^2}(\frac{L}{2}D^2+\frac{\sigma^2L}{4})\\
&\leq&\frac{2D^2+\sigma^2}{(N+2)^{\frac{1}{2}}}+L\frac{4D^2+2\sigma^2}{(N+2)^2}.
\end{eqnarray*}
\end{proof}

\begin{remark}
Algorithm \ref{algo:sgsmooth} obtains an asymptotically rate of convergence $\mathbb{E}\left(\phi(x_{N+1})-\phi(x^*)\right)=O(\frac{1}{\sqrt{N}})$ which is the same as the convergence rate of  the AC-SA algorithm proposed by Ghadimi and Lan \cite{Lan:10b} up to a constant factor. This convergence rate is also known to be asymptotically optimal (see \cite{Nemirovski:83}) in terms of the number of iterations $N$.
\end{remark}

\section{Smoothing Stochastic Gradient Descent Algorithm}
Notice that a projection mapping 
\begin{equation}
\label{projmap}
z_{t+1}=\argmin_x\{\left\langle x,G(y_t,\xi_t)\right\rangle +\frac{\gamma_t L}{2}\|x-z_t\|^2+h(x)\}
\end{equation}
must be solved in the step 3 of Algorithm \ref{algo:sgsmooth}. Similar type of projection mappings also appear in other gradient or stochastic gradient algorithms. As indicated in Section 1, (\ref{projmap}) does not necessarily have a closed from solution. This happens, in particular,  in group lasso problem with overlapped group structures \cite{Bach:09} and fused lasso \cite{Tibshirani:09}. In this case, another iterative algorithm has to be designed for solving this projection mapping in each iteration of Algorithm \ref{algo:sgsmooth}, which could make Algorithm \ref{algo:sgsmooth} very slow for practical applications.

In order to modify Algorithm \ref{algo:sgsmooth} the problems whose corresponding projection mappings have no closed form, we utilize the smoothing technique proposed by Nesterov \cite{Nesterov:05} to construct a smooth approximation for problem (\ref{sobj}) before we apply Algorithm \ref{algo:sgsmooth}.

Suppose the non-smooth part $h(x)$ in (\ref{sobj}) can be represented as
\begin{equation*}
h(x)=\max_{v\in Q}{v^TAx},
\end{equation*}
we consider the function
\begin{equation}
\label{hu}
h_{\mu}(x)=\max_{v\in Q}\{{v^TAx}-\mu d(v)\}.
\end{equation}
Here, the parameter $\mu$ is a positive constant and $d(v)$ is a smooth and strongly convex function on $Q$. According to \cite{Nesterov:05}, the function $h_{\mu}(x)$ is a smooth lower approximation for $h(x)$ if $\mu$ is positive. In fact, it can be shown that 
\begin{equation*}
h_{\mu}(x)\leq h(x)\leq h_{\mu}(x)+\mu M \mbox{ for all } x, 
\end{equation*}
where $M=\max_{v\in Q}d(v)$.
Hence, the parameter $\mu$ controls the accuracy of approximation.

We denote by $v_{\mu}(x)$ the optimal solution of the maximization problem involved in (\ref{hu}). Since $d(v)$ is strongly convex, $v_{\mu}(x)$ is well-defined because of the uniqueness of the optimal solution. It is proved in \cite{Nesterov:05} that $h_{\mu}(x)$ is a smooth function whose gradient is
\begin{equation}
\label{ghmu}
\nabla h_{\mu}(x)=A^Tv_{\mu}(x).
\end{equation}

Therefore, the function $\phi_{\mu}(x)\equiv f(x)+ h_{\mu}(x)$ performs as a smooth lower approximation for $\phi(x)$ in problem (\ref{sobj}) and we have 
\begin{equation}
\label{mubound}
\phi_{\mu}(x)\leq \phi(x)\leq \phi_{\mu}(x)+\mu M \mbox{ for all } x.
\end{equation} 
By (\ref{ghmu}), the gradient of  $\phi_{\mu}(x)$ is 
\begin{equation}
\label{gphimu}
\nabla\phi_{\mu}(x)=\nabla f(x)+A^Tv_{\mu}(x).
\end{equation}
and $G(x,\xi)+A^Tv_{\mu}(x)$ provides its stochastic approximation. It is easy to see that $\mathbb{E} (G(x,\xi)+A^Tv_{\mu}(x))=\nabla\phi_{\mu}(x)$ and $\mathbb{\mathbb{E}}\|\nabla\phi_{\mu}(x)-G(x,\xi)-A^Tv_{\mu}(x)\|^2\leq\sigma^2$ for any $x$.

It is also shown in \cite{Nesterov:05} that the gradient $\nabla\phi_{\mu}(x)$ is Lipschitz-continuous with a Lipschitz constant
\begin{equation}
\label{Lcsmooth}
L_{\mu}=L+\frac{1}{c\mu}\|A\|^2,
\end{equation}
where $\|A\|=\max_{\|x\|=1,\|y\|=1}y^TAx$ and  $c>0$ is the strong convexity parameter of function $d(v)$.

Since $\phi_{\mu}(x)$ is a smooth function with a stochastic gradient $G(x,\xi)+A^Tv_{\mu}(x)$ at each $x$, we can apply Algorithm \ref{algo:sgsmooth} to minimize $\phi_{\mu}(x)$.  When the smooth parameter $\mu$ is small enough, the solution we get will also be a good approximate solution for (\ref{sobj}). This modified algorithm is proposed as Algorithm \ref{algo:sgsmoothappx} as follows.

\begin{algorithm}[!th]
\caption{Smoothing Stochastic Gradient Descent Algorithm (SSG)}

\textbf{Input}: The total number of iterations $N$, the Lipschitz constant $L$ for $f(x)$ and the smooth parameter $\mu$.

\textbf{Initialization}: Compute the Lipschitz constant $L_{\mu}$ by (\ref{Lcsmooth}).
 Choose $\theta_t=\frac{2}{2+t}$ and $\gamma_t=\frac{2}{t+2}(\frac{N^{\frac{3}{2}}}{L_{\mu}}+2)$. Set $x_0=0$, $z_0=0$ and $t=0$.

\textbf{Iterate} for $t=0,1,2,\ldots N$:
\begin{enumerate}
\item $y_t=(1-\theta_t)x_t+\theta_tz_t$
\item $v_{\mu}(y_t)=\argmax_{v\in Q}\{{v^TAy_t}-\mu d(v)\}$
\item Generate a random vector $\xi_t$ independently from the distribution of $\xi$
\item $z_{t+1}=\argmin_x\{\left\langle x,G(y_t,\xi_t)+A^Tv_{\mu}(y_t)\right\rangle +\frac{\gamma_t L_{\mu}}{2}\|x-z_t\|^2\}$
\item $x_{t+1}=(1-\theta_t)x_t+\theta_tz_{t+1}$
\end{enumerate}

\textbf{Output}: $x_{N+1}$

\label{algo:sgsmoothappx}
\end{algorithm}

\begin{remark}
Similar to the step 3 in Algorithm \ref{algo:sgsmooth}, Algorithm \ref{algo:sgsmoothappx} also has to solve a projection mapping in step 4. However, since $\phi_{\mu}(x)$ does not contain a non-smooth term like $h(x)$ in (\ref{sobj}), the projection mapping in step 4 is simply an unconstrained quadratic programming whose optimal solution has a closed form.
\end{remark}

Since Algorithm \ref{algo:sgsmoothappx} just solves an approximation of (\ref{sobj}), we have to make the smooth parameter $\mu$  small enough in order to make the solution returned by Algorithm \ref{algo:sgsmoothappx} a near-optimal one for (\ref{sobj}). However,  according to (\ref{Lcsmooth}) and  Theorem \ref{smoothconveg}, 
decreasing smooth parameter $\mu$ will increase the Lipschitz constant $L_{\mu}$ and more iterations will be needed in Algorithm \ref{algo:sgsmoothappx} in order to minimize $\phi_{\mu}(x)$. Fortunately, by Theorem \ref{smoothconveg}, the Lipschitz constant $L_{\mu}$ only appears in the $O(\frac{L_{\mu}}{N^2})$ component of the convergence rate, which is dominated by the $O(\frac{1}{N^{1/2}})$ component. This means that, as long as $\mu=O(\frac{1}{N^{\gamma}})$ with $\gamma\leq\frac{3}{2}$ ,which implies $L_{\mu}=O(N^{\gamma})$ with $\gamma\leq\frac{3}{2}$, the convergence rate of Algorithm \ref{algo:sgsmoothappx} is still $O(\frac{1}{N^{1/2}})$. Based on this observation, we prove the following convergence result for Algorithm \ref{algo:sgsmoothappx}  when $\mu=O(\frac{1}{N})$. The similar results can be found in \cite{Nesterov:05} and \cite{Lan:09}.

\begin{theorem}
\label{nonsmoothconveg} If we set $\mu=\frac{\|A\|}{(N+2)}$ in Algorithm \ref{algo:sgsmoothappx}, then after $N$ iterations, we will have:
\begin{equation*}
\mathbb{E}\phi(x_{N+1})-\phi(x^*)\leq\frac{2D^2+\sigma^2}{(N+2)^{\frac{1}{2}}}+L\frac{4D^2+2\sigma^2}{(N+2)^2}+\frac{\|A\|}{(N+2)}(M+\frac{4D^2+2\sigma^2}{c})
\end{equation*}
\end{theorem}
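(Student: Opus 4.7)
The plan is to reduce Theorem~\ref{nonsmoothconveg} to Theorem~\ref{smoothconveg} applied to the smoothed problem $\min_x \phi_\mu(x)$. Since $\phi_\mu(x)=f(x)+h_\mu(x)$ is smooth with Lipschitz gradient constant $L_\mu$, and the random vector $G(y_t,\xi_t)+A^Tv_\mu(y_t)$ used in step 4 of Algorithm~\ref{algo:sgsmoothappx} is an unbiased stochastic gradient of $\phi_\mu$ whose error $\nabla\phi_\mu(y_t)-G(y_t,\xi_t)-A^Tv_\mu(y_t)=\nabla f(y_t)-G(y_t,\xi_t)$ has second moment bounded by $\sigma^2$, the hypotheses of Theorem~\ref{smoothconveg} are satisfied (with $L$ replaced by $L_\mu$) for the smoothed objective. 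Algorithm~\ref{algo:sgsmoothappx} is then exactly Algorithm~\ref{algo:sgsmooth} run on $\phi_\mu$.

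First I would invoke Theorem~\ref{smoothconveg} with the comparison point taken to be $x^*$, the optimum of the original problem (\ref{sobj}), rather than the minimizer of $\phi_\mu$. Inspecting the proof of Theorem~\ref{smoothconveg}, the only property of $x^*$ that is used is the convexity bound $f(y_t)+\langle x^*-y_t,\nabla f(y_t)\rangle+h(x^*)\le \phi(x^*)$, which holds for every point by convexity, so the same argument delivers
\begin{equation*}
\mathbb{E}\phi_\mu(x_{N+1})-\phi_\mu(x^*)\leq\frac{2D^2+\sigma^2}{(N+2)^{1/2}}+L_\mu\frac{4D^2+2\sigma^2}{(N+2)^2},
\end{equation*}
with $D=\|x^*-z_0\|$ unchanged.

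Next I would bridge $\phi_\mu$ and $\phi$ using the sandwich inequality (\ref{mubound}): the upper inequality gives $\phi(x_{N+1})\le\phi_\mu(x_{N+1})+\mu M$, while the lower inequality gives $\phi_\mu(x^*)\le\phi(x^*)$. Adding these to the displayed bound yields
\begin{equation*}
\mathbb{E}\phi(x_{N+1})-\phi(x^*)\leq\frac{2D^2+\sigma^2}{(N+2)^{1/2}}+L_\mu\frac{4D^2+2\sigma^2}{(N+2)^2}+\mu M.
\end{equation*}

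Finally I would specialize to $\mu=\|A\|/(N+2)$. By (\ref{Lcsmooth}), $L_\mu=L+\|A\|^2/(c\mu)=L+\|A\|(N+2)/c$, so
\begin{equation*}
L_\mu\frac{4D^2+2\sigma^2}{(N+2)^2}=L\frac{4D^2+2\sigma^2}{(N+2)^2}+\frac{\|A\|}{c(N+2)}(4D^2+2\sigma^2),
\end{equation*}
and $\mu M=\|A\|M/(N+2)$. Collecting the two $1/(N+2)$ contributions gives exactly the stated bound. There is no serious obstacle here; the only point to be careful about is that Theorem~\ref{smoothconveg} must be applied against the (possibly non-optimal for $\phi_\mu$) point $x^*$, which is harmless because its proof only uses convexity at the comparison point, and that the error of the composite stochastic gradient for $\phi_\mu$ coincides with the original noise $\nabla f(y_t)-G(y_t,\xi_t)$, preserving the variance bound $\sigma^2$.
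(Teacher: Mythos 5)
Your proposal is correct and follows essentially the same route as the paper: apply Theorem~\ref{smoothconveg} to the smoothed objective $\phi_{\mu}$ with Lipschitz constant $L_{\mu}$, bridge back to $\phi$ via the sandwich inequality (\ref{mubound}), and substitute $\mu=\|A\|/(N+2)$. Your added remark that the comparison point $x^*$ need not minimize $\phi_{\mu}$ but that the proof of Theorem~\ref{smoothconveg} only uses convexity at that point is a worthwhile clarification of a detail the paper passes over silently.
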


\begin{proof}
Because $\phi(x_{N+1})-\phi_{\mu}(x_{N+1})\leq\mu M$ and $\phi_{\mu}(x^*)-\phi(x^*)\leq 0$, we have
\begin{eqnarray}
&&\mathbb{E}\phi(x_{N+1})-\phi(x^*)\\\nonumber
&=&\mathbb{E}\phi(x_{N+1})-\mathbb{E}\phi_{\mu}(x_{N+1})+\mathbb{E}\phi_{\mu}(x_{N+1})-\phi_{\mu}(x^*)+\phi_{\mu}(x^*)-\phi(x^*)\\\nonumber
&\leq&\mu M+\mathbb{E}\phi_{\mu}(x_{N+1})-\phi_{\mu}(x^*)\\\nonumber
&\leq&\mu M+\frac{2D^2+\sigma^2}{(N+2)^{\frac{1}{2}}}+(L+\frac{1}{c\mu}\|A\|^2)\frac{4D^2+2\sigma^2}{(N+2)^2}.\\\nonumber
\end{eqnarray}
where the last inequality is by Theorem \ref{smoothconveg} and (\ref{Lcsmooth}).
Setting $\mu=\frac{\|A\|}{(N+2)}$, we have 
\begin{equation*}
\mathbb{E}\phi(x_{N+1})-\phi(x^*)\leq \frac{2D^2+\sigma^2}{(N+2)^{\frac{1}{2}}}+L\frac{4D^2+2\sigma^2}{(N+2)^2}+\frac{\|A\|}{(N+2)}(M+\frac{4D^2+2\sigma^2}{c})
\end{equation*}
\end{proof}

\begin{remark}
This theorem shows a difference between exact gradient descent and stochastic descent algorithm when smoothing technique is applied. The gradient descent algorithm proposed in \cite{Nesterov:05} obtains a convergence rate of $O(\frac{1}{N^2})$ but it has to be reduced to  $O(\frac{1}{N})$ after applying the smoothing technique. However, for Algorithm \ref{algo:sgsmoothappx}, smoothing technique only slows down a non-dominating component in the convergence rate such that Algorithm \ref{algo:sgsmoothappx} still obtains a convergence rate of $O(\frac{1}{N^{1/2}})$ which is the same as Algorithm \ref{algo:sgsmooth}.  In other words, the price paid for incorporating a smoothing technique is negligible.
\end{remark}

Suppose the smooth component $f(x)$ in the objective function is not just convex but also strongly convex, the stochastic gradient algorithms developed in \cite{Lan:10b} and \cite{Hu:09} can achieve a convergence rate of $O(\frac{1}{N})$. Similar to the only convex cases, this convergence rate consists of two components, one term of $O(\frac{L}{N^2})$ which is not dominating but contains the Lipschitz constant $L$ and one term of  $O(\frac{1}{N})$ which is the bottle neck but independent of $L$. Hence, by the same reasons as above, if we incorporate the smooth technique into the algorithms in \cite{Lan:10b} and \cite{Hu:09} for strongly convex objective functions just as we did in Algorithm \ref{algo:sgsmoothappx}, we can obtain similar smoothing stochastic gradient algorithms with a convergence rate $O(\frac{1}{N})$.

\section{Numerical Results}
In this section, we apply our algorithms to four different types of regularized regression problems which belong to the class of problems formulated by (\ref{sobj}). We compare our numerical results with the AC-SA algorithm proposed by Ghadimi and Lan in \cite{Lan:10b}. We used a Matlab implementation and ran the experiments in a computer with an Intel(R) Core(TM)2 Duo CPU T8300 2.40GHz processor and 2.00GB RAM.

\subsection{Regularized Linear Regression with Discrete Probability Distribution}
Suppose there are $K$ data points $\{(x_i,y_i)\}_{i=1}^K$ with $x_i\in \mathbb{R}^p$ and $y_i\in \mathbb{R}$.
The task of \textsl{linear regression} is to find the parameters $\bb\in \mathbb{R}^p$ to fit the linear model $y=\bb^Tx+\epsilon$ by minimizing the average square loss function
\begin{equation*}
f_1(\bb)=\frac{1}{2}\sum_{i=1}^{K}\frac{\|x_i^T\bb-y_i\|^2}{K}=\frac{1}{2K}\|\bX\bb-\by\|^2,
\end{equation*}
where $\bX=[x_1,\dots,x_K]^T$ and $\by=[y_1,\dots,y_K]^T$. Here, we assume each instance $(x_i,y_i)$ occurs with equal chance, i.e., with a probability $\frac{1}{K}$ so that $f_1(\bb)$ is essentially the $\frac{1}{2}$ multiple of the expectation of the square loss $(x^T\bb-y)^2$.

The gradient of $f_1(\bb)$ is 
\begin{equation*}
 \nabla f_1(\bb)=\frac{1}{K}\bX^T(\bX\bb-\by).
\end{equation*}
It is easy to prove that $\nabla f_1(\bb)$ is Lipschitz continuous with a Lipschitz constant $L=\lambda_{\max}(\bX^T\bX)$ which denotes the largest eigenvalue of matrix $\bX^T\bX$.

Since we are testing stochastic gradient descent algorithms, we have to generate the stochastic gradient for $f_1(\bb)$ in each iteration. We first randomly sample a subset $\{(x_i,y_i)\}_{i\in S}$ with $S\subset\{1,2,\dots,K\}$ from the whole data set and the stochastic gradient  $G_1(\bb,S)$  corresponding to this sample is
\begin{equation}
\label{lsrsg}
 G_1(\bb,S)=\frac{1}{|S|}\bX_S^T(\bX_S\bb-\by_S),
\end{equation}
where $\bX_S$ and $\bY_S$ are sub-matrices of $\bX$ and $\bY$ whose rows are indexed by the elements of $S$.

When the data points belong to a high dimensional space, we are interested in selecting a small number of input features of the data which contribute most to influence the output. Hence, we want to minimize $f_1(\bb)$ with a regularization term $\Omega(\bb)$ which forces a highly sparse $\bb$ with zeros in the components corresponding to the less relevant input features. Then the regularized linear regression problem is defined as
\begin{equation}
\label{rlsr}
\min_{\bb}f_1(\bb)+\lambda\Omega(\bb),
\end{equation}
where $\lambda$ is the parameter that controls the regularization level.

In our numerical experiments, we consider two different choices of $\Omega(\bb)$. One choice is simply the $\ell_1$-norm of $\bb$, i.e., 
\begin{equation}
\label{1norm}
\Omega_1(\bb)=\|\bb\|_1.
\end{equation} 
A linear regression problem regularized by $\Omega_1(\bb)$ is also known as a lasso problem \cite{Tibshirani:96}.

We apply Algorithm 1 (SG) and Algorithm 2 (SSG) proposed in this paper and also the AC-SA algorithm proposed in \cite{Lan:10b} to problem (\ref{rlsr}) with $\Omega(\bb)=\Omega_1(\bb)$. In this case, the projection mappings in both SG and AC-SA have a closed form solution (see \cite{Liu:09}). In order to apply SSG, we observe that the non-smooth term in the objective function can be represented as $\lambda\Omega_1(\bb)=\max_{\|\ba\|_{\infty}\leq 1}\ba^TA\bb$ where $A=\lambda I$ and we choose $d(\ba)=\frac{1}{2}\|\ba\|^2$  as the strongly convex function in SSG.

We randomly generate a dataset $\{(x_i,y_i)\}_{i=1}^K$ as follows. First of all, we choose the real parameter $\hat{\bb}\in \mathbb{R}^p$ to be $[1,1,\dots,1,0,0,\dots,0]^T$ with first $p/2$ components equal to $1$ and last $p/2$ components equal to $0$.
And then, we generate each data point $x_i\in \mathbb{R}^p$ by generating each of its component $x_{ij}$ from a standard normal distribution $N(0,1)$ independently and we generate $y_i$ by setting $y_i=\bb^Tx_i+\epsilon_i/10$ with $\epsilon_i$ generated from a standard normal distribution $N(0,1)$.

We generate a set of data as above with $K=1000$ and $p=20$ and we set the parameters $\lambda=0.1$ and the total number of iterations $N=50000$. In each iteration, we randomly sample $10$ data points, i.e. $|S|=10$, to generate the stochastic gradient $G(\bb,S)$ by (\ref{lsrsg}). The numerical performances of these three algorithms are shown in the left figure in Figure 1. The horizontal line represents the CPU running time and the vertical line represents the value of objective function. 


Similarly, we apply these three algorithms to problem (\ref{rlsr}) with $\Omega(\bb)=\Omega_1(\bb)$ on a larger dataset with $K=100000$ and $p=200$. We still set $\lambda=0.1$ and $N=50000$ but we increase the sample size $|S|$ to $100$. The decreases of the objective values  with time by these three algorithms are shown in the right figure in Figure 1.
 
\begin{figure}
		\includegraphics[height=2in,width=2.5in]{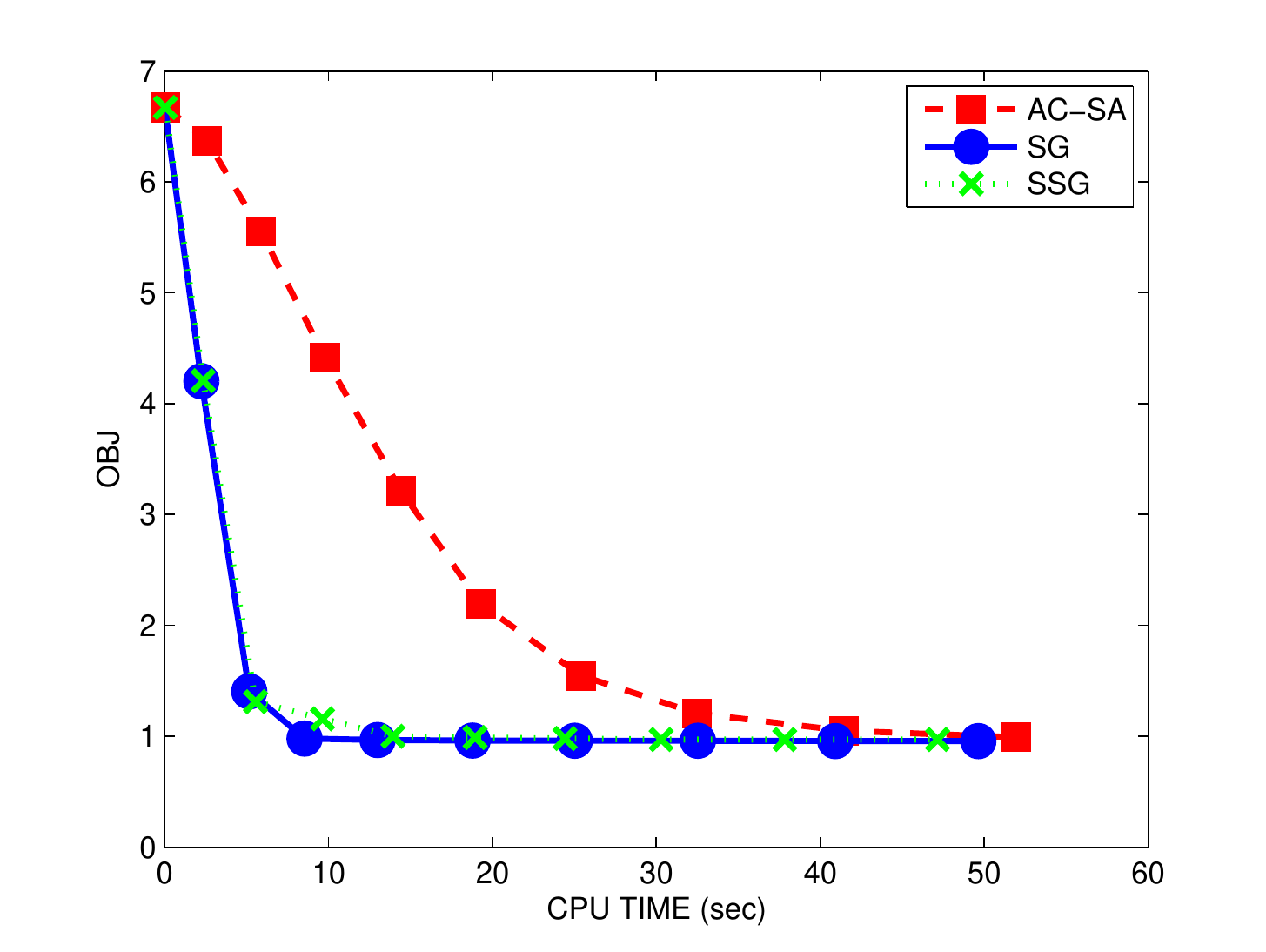}
		\includegraphics[height=2in,width=2.5in]{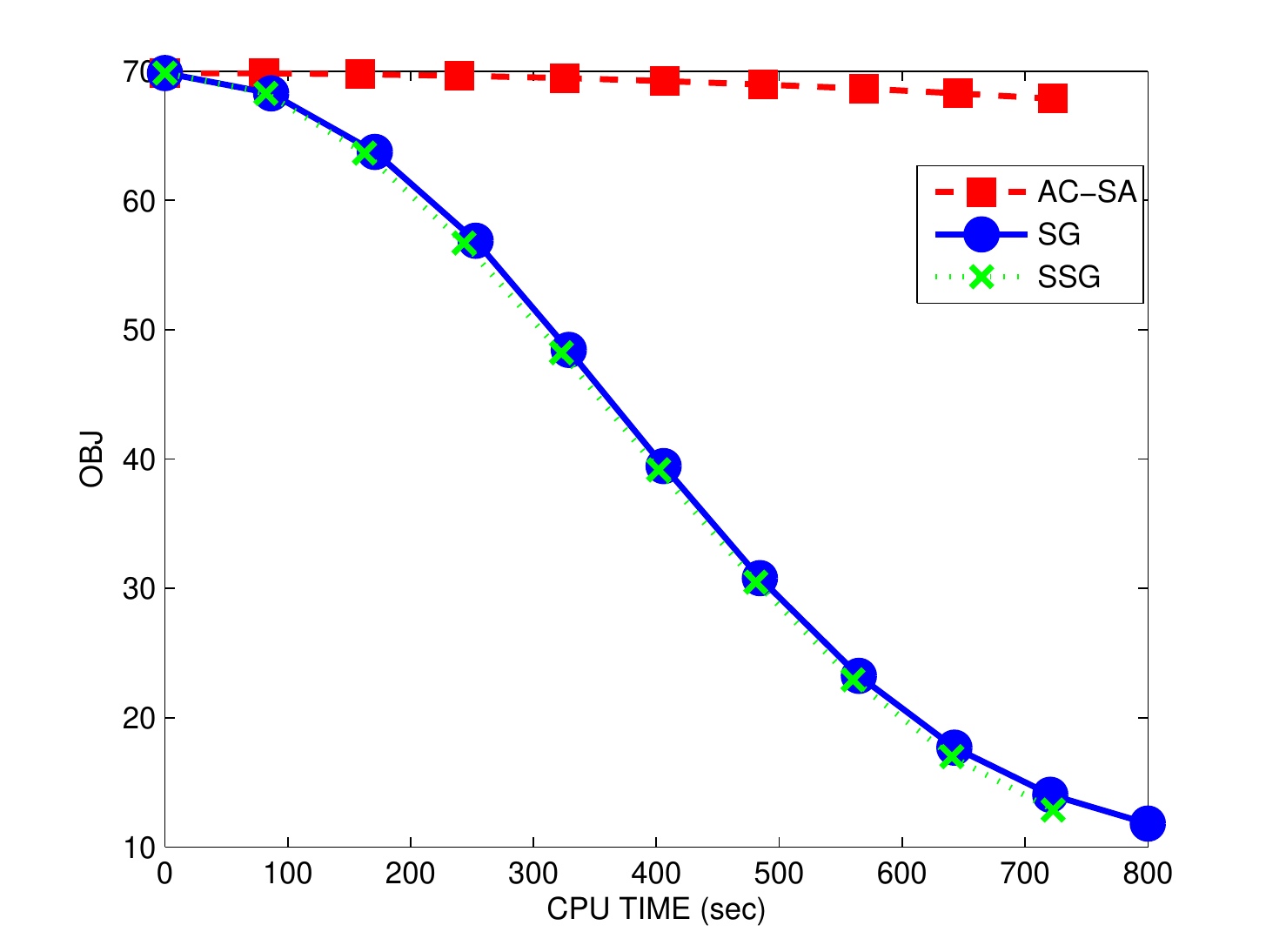}
	\caption{\centering linear regression with $\ell_1$-norm regularization \newline Left: $K=1000$, $p=20$; Right:$K=100000$, $p=200$.}
	\label{fig:lsr1n100020}
\end{figure}

The other choice for $\Omega(\bb)$ is the overlapped group sparsity inducing norm introduced by Jenatton et al. \cite{Bach:09}. Suppose the set of groups of inputs $\mathcal{G}=\{g_1,\dots,g_{|\mathcal{G}|}\}$ is a subset of the power set of $\{1,2,\dots,p\}$, the overlapped group sparsity inducing norm $\Omega_2(\bb)$ is defined as 
\begin{equation}
\label{groupnorm}
\Omega_2(\bb)=\sum_{g\in \mathcal{G}}w_g\|\bb_g\|,
\end{equation}
where $\bb_g\in \mathbb{R}^{|g|}$ is a sub-vector of $\bb$ which only contains the components of $\bb$ indexed by the elements of $g$ and $w_g$ is a positive constant for each $g\in \mathcal{G}$. 

To be specific, in our numerical experiments, we set $p=2^n$ for a positive integer $n$ and $w_g=\sqrt{|g|}$ and we define the set of groups of inputs $\mathcal{G}$ as follows
\begin{equation}
\label{treegroup}
\mathcal{G}=\left\{
\begin{array}{l}
g_{0,1},g_{0,2},\dots,\dots,g_{0,2^n},\\
g_{1,1},g_{1,2},\dots,g_{1,2^{n-1}},\\
\dots,\dots,\dots,\\
g_{i,1},\dots,g_{i,2^{n-i}},\\
\dots,\dots,\\
g_{n,1}\\
\end{array}
\right\},
\end{equation}
where
\begin{equation}
\label{treeg}
g_{i,j}=\{(j-1)2^i+1,(j-1)2^i+2,\dots,j2^i\}\mbox{ for } i=1,2,\dots,n \mbox{  and }j=1,2,\dots,2^{n-i}.
\end{equation}
This particular type of overlapped group sparsity inducing norm is also called \textsl{hierarchical norm}\cite{Bach:09}.

We apply algorithm SG, SSG and AC-SA to the problem (\ref{rlsr}) with $\Omega(\bb)=\Omega_2(\bb)$. In this case, the projection mappings in SG and AC-SA algorithms no longer have a closed form solutions. Jenatton et al. \cite{Jenatton:10} propose a coordinate descent method which can solve the projection mappings within $|\mathcal{G}|$ iterations. We adopt their method as a subroutine for solving the projection mappings when we apply SG and AC-SA to the hierarchical norm regularized regression problem. 

In order to apply SSG, we need to reformulate the non-smooth term $\lambda \Omega_2(\bb)$ with formulation (\ref{maxh}). Since the dual norm of Euclidean norm is Euclidean norm itself, 
$\|\bb_g\| =  \max_{\|\ba_g\| \leq 1} \ba_g^T\bb_g,$ where
$\ba_g \in \mathbb{R}^{|g|}$ is the vector of auxiliary variables
associated to $\bb_g$. Let $\ba=\left[\ba_{g_1}^T, \ldots,
\ba_{g_{|\mathcal{G}|}}^T\right]^T$ be the vector of length $\sum_{g
\in \mathcal{G}} |g|$ and denote the domain of $\ba$ by $\mathcal{Q} \equiv
\{\ba \ |\ \|\ba_g\| \leq 1 ,  \ \forall g \in \mathcal{G} \}$.
Note that, $\mathcal{Q}$ is the Cartesian product of unit balls in
Euclidean space which is a closed and convex set. We can rewrite $\lambda \Omega_2(\bb)$ as:
\begin{equation}
\label{eq:saddle_pen}
\lambda \Omega_2(\bb)= \lambda  \sum_{g \in \mathcal{G}}  w_g \max_{\|\ba_g\| \leq 1}
  \ba_g^T\bb_g =\max_{\ba \in \mathcal{Q}} \sum_{g \in \mathcal{G}}  \lambda w_g
  \ba_g^T\bb_g  =\max_{\ba \in \mathcal{Q}} \ba^T A \bb,
\end{equation}
where $A \in \mathbb{R}^{{\sum_{g \in \mathcal{G}}|g|} \times J}$ is
a matrix such that $A\bb=\left[\lambda w_{g_1}\bb_{g_1}^T, \ldots,
\lambda w_{g_{|\mathcal{G}|}}\bb_{g_{|\mathcal{G}|}}^T\right]^T$. The rows of $A$ are indexed by all pairs of $(i,g)$ such that $i\in \{1,\ldots, p\},  i\in g$ and its columns are indexed by $j \in \{1,
\ldots, p\}$ and $A$ is defined as:
\begin{equation}
\label{eq:matrixC} A_{(i,g),j}=\left\{
\begin{array}{ll}
\lambda w_g&\mbox{if }i=j\\
0&\mbox{otherwise}
\end{array}.
\right.
\end{equation}
Different from SG and AC-SA, the projection mapping in AC-SA always has a closed form solution.

We generate a dataset $\{(x_i,y_i)\}_{i=1}^K$ in the same way as before with $K=1000$ and $n=5$ ($p=2^n=32$) and set the parameters $\lambda=0.1$, the total number of iterations $N=10000$ and the sample size $|S|=10$. The numerical results by these algorithms on this data set are shown in the left figure in Figure 2. Also, we generate a larger data set with $K=100000$ and $n=9$ ($p=2^n=512$) and run the algorithms on it with  $\lambda=0.1$, $N=10000$ and $|S|=100$. The numerical results are posted in the right figure in Figure 2.

\begin{figure}
		\includegraphics[height=2in,width=2.5in]{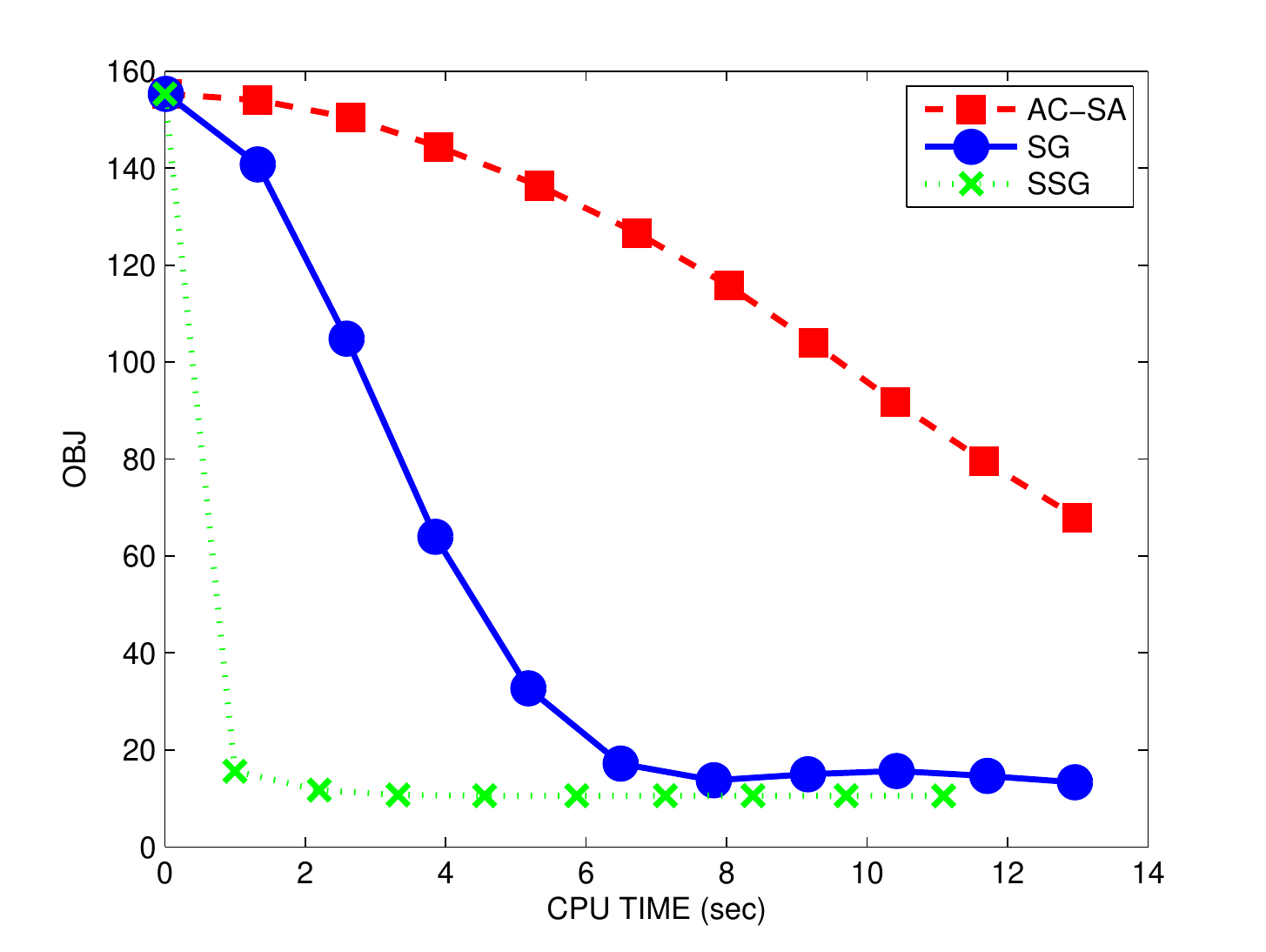}
		\includegraphics[height=2in,width=2.5in]{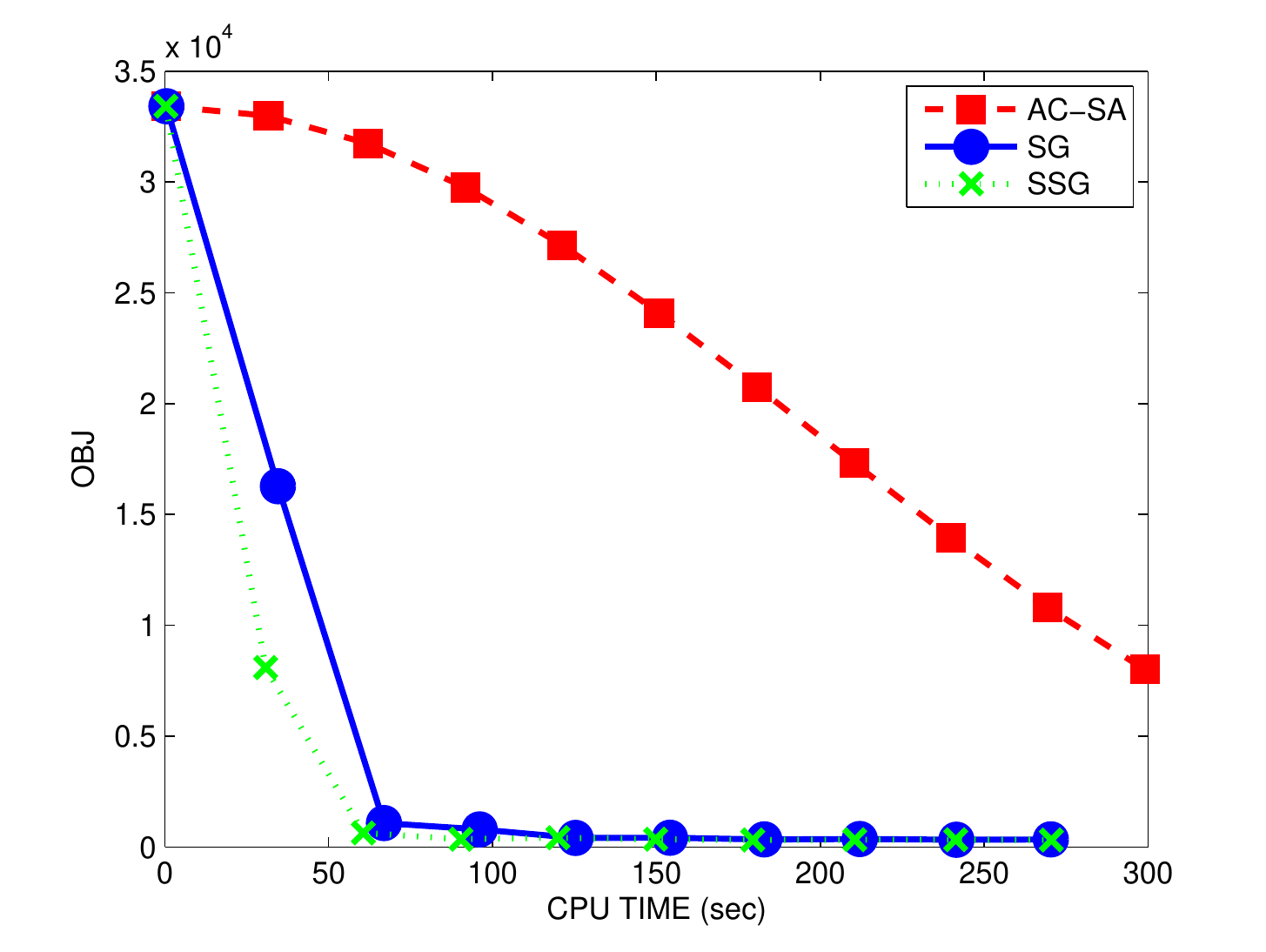}
\caption{\centering linear regression with hierarchical norm regularization \newline Left: $K=1000$, $p=32$; Right:$K=100000$, $p=512$.}
	\label{fig:lsrtree100032}
\end{figure}

From Figure 1 and Figure 2, we can imply that even though the SG, SSG and AC-SA have the same $O(\frac{1}{\sqrt{N}})$ theoretical convergence rate, their performances are different in practical applications. In three out of the four experiments, SG and SSG are more efficient than AC-SA. One of the reasons for this result is that AC-SA algorithm chooses a very small step length (see the choice of $\gamma_t$ in Proposition 4 in \cite{Lan:10b}) in order to mitigate the impact from the inaccuracy of the stochastic gradient. This is needed in the theoretical proof of the convergence rate of AC-SA. However, when the stochastic gradient is a good approximation for the exact gradient, e.g., $\sigma^2$ is very small, the small step length in AC-SA is too conservative. Instead, SG and SSG adopt a relatively larger step length such that they can reduce the objective functions more efficiently.

The influence of the smoothing technique by Nesterov \cite{Nesterov:05} is also reflected by these numerical results. When $\Omega_1(\bb)$ is chosen as the regularization term, the projection mapping in SG has a closed form solution so that applying the smoothing technique is not necessary. Hence, in Figure 1, the blue curve (SG) and the green curve (SSG) almost overlap. This is comply with that fact that SG and SSG have a same $O(\frac{1}{\sqrt{N}})$ complexity shown by Theorem \ref{smoothconveg} and Theorem \ref{nonsmoothconveg}.

However, in Figure 2, we can see that SSG is much more efficient than SG. This is because SG has to use a coordinate descent method to solve the projection mapping due to the lack of closed form solution when $\Omega_2(\bb)$ is the regularization term. Even though the coordinate descent method is shown to converge after finite steps. It is still not necessarily faster than solving it by a closed form which is available in SSG.

\subsection{Regularized Linear Regression with Continuous Probability Distribution}
Here, we apply our algorithms again on the regularized linear regression problems. However, this time, we assume that there are infinitely many data points $(x,y)\in \mathbb{R}^{p+1}$ which follow a continuous distribution $p(x,y)$. The task is still to find the parameters $\bb\in \mathbb{R}^{p}$ to fit the linear model $y=\bb^Tx+\epsilon$ by minimizing the average square loss function
\begin{equation*}
f_1^c(\bb)=\frac{1}{2}\mathbb{E}(x^T\bb-y)^2=\frac{1}{2}\int{(x^T\bb-y)^2}p(x,y)dxdy.
\end{equation*}

In our numerical experiment, we make $x$ follow the standard normal distribution in $\mathbb{R}^{p}$, i.e, $p(x)=N(0,I)$. The real parameters $\hat{\bb}$ are chosen to be $[1,1,\dots,1,0,0,\dots,0]^T$ with first $p/2$ components equal to $1$ and last $p/2$ components equal to $0$ and the error term $\epsilon$ in the linear model is assumed to have a standard normal distribution $N(0,1)$ so that the variable $y$ follows a normal distribution $p(x|y)=N(x^T\hat{\bb},1)$ once $x$ is fixed. By these settings, the distribution $p(x,y)$ in our numerical experiments is
\begin{equation*}
p(x,y)=p(x)p(y|x)=\frac{1}{(2\pi)^{\frac{p}{2}}}e^{-\frac{1}{2}x^Tx}\frac{1}{(2\pi)^{\frac{1}{2}}}e^{-\frac{1}{2}(y-\hat{\bb}^Tx)^2}=\frac{1}{(2\pi)^{\frac{p+1}{2}}}e^{-\frac{x^Tx+(y-\hat{\bb}^Tx)^2}{2}}.
\end{equation*}
It is easy to show that, in this case, the loss function $f_1^c(\bb)$ becomes
\begin{equation*}
f_1^c(\bb)=\frac{1}{2}(\bb^T\bb-2\bb^T\hat{\bb}+\frac{p}{2}+1),
\end{equation*}
whose gradient is simply $\nabla f_1^c(\bb)=\bb-\hat{\bb}$ with a Lipschitz constant $L=1$.

Similar to the discrete cases, we apply our algorithms to the following regularized linear regression problem 
\begin{equation}
\label{rlsrcon}
\min_{\bb}f_1^c(\bb)+\lambda\Omega(\bb),
\end{equation}
where $\Omega(\bb)$ is the regularization term.

In order to generate a stochastic approximation for $\nabla f_1^c(\bb)$, in each iteration, we sample a set of points $S=\{(x_i,y_i)\}_{i=1,\dots,|S|}$ by generating $x_i$ from $N(0,I)$ and $\epsilon_i$ from $N(0,1)$ and setting $y_i=x_i^T\hat{\bb}+\epsilon_i$ for  $i=1,\dots,|S|$. Then we can compute a stochastic gradient $G(\bb,S)$ by (\ref{lsrsg}).

First, we apply AC-SA, SG and SSG algorithms on (\ref{rlsrcon}) with $\Omega(\bb)=\Omega_1(\bb)$, $p=1000$, $|S|=10$ and $\lambda=0.1$. The performances of these algorithms are shown in Figure \ref{fig:lsrcon1n1000}.

\begin{figure}
	\centering
		\includegraphics[height=2.5in,width=3in]{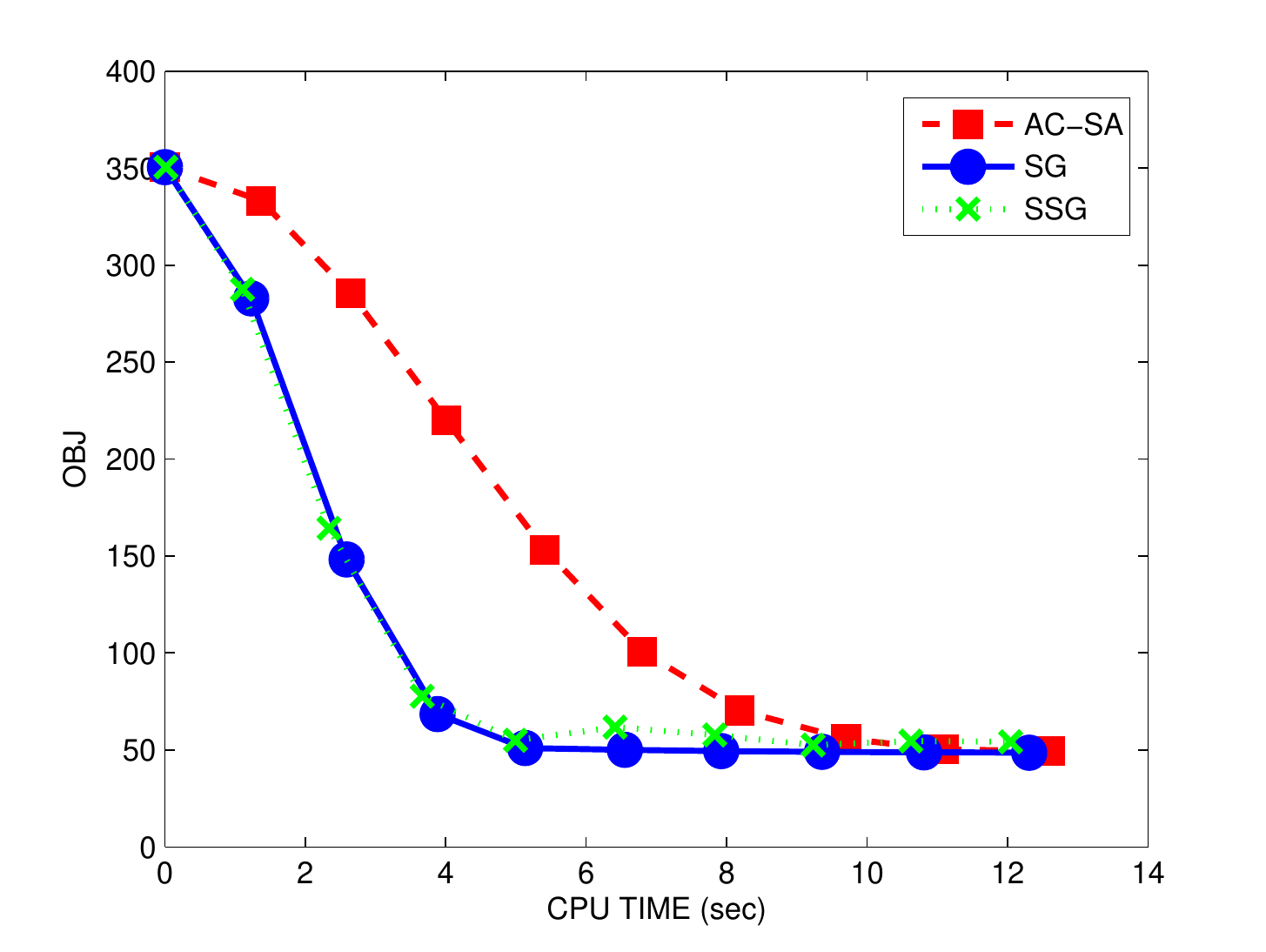}
		\caption{\centering linear regression with $\ell_1$-norm regularization and $K=+\infty$, $p=1000$}
	\label{fig:lsrcon1n1000}
\end{figure}

Then, we apply these three algorithms on on (\ref{rlsrcon}) with $\Omega(\bb)=\Omega_2(\bb)$, $n=8 (p=2^n=256)$, $|S|=100$ and $\lambda=0.1$. The numerical performances are presented in Figure \ref{fig:lsrcontree256}.

\begin{figure}
	\centering
		\includegraphics[height=2.5in,width=3in]{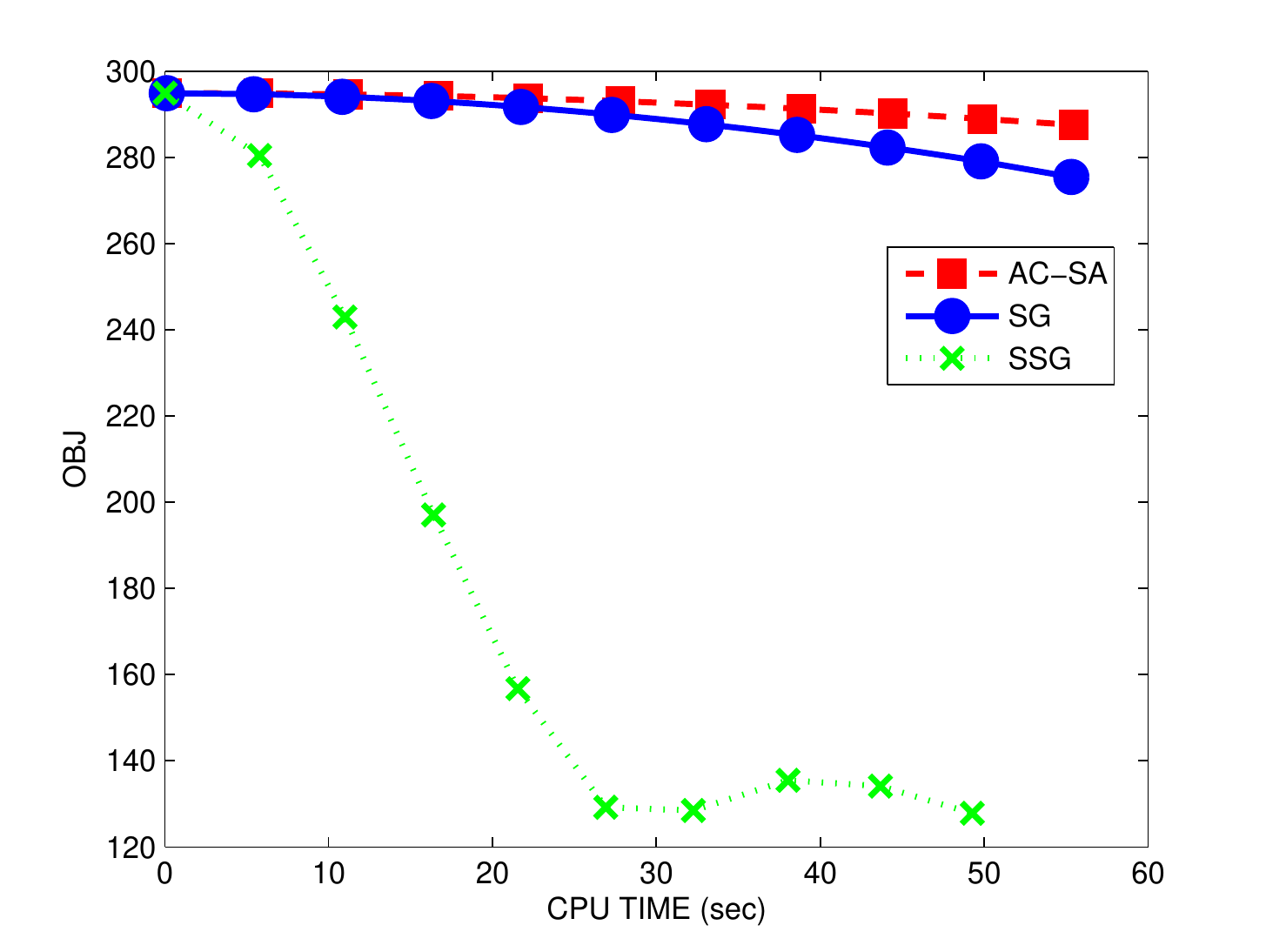}
		\caption{\centering linear regression with hierarchical norm regularization and $K=+\infty$, $p=256$}
	\label{fig:lsrcontree256}
\end{figure}

Figure \ref{fig:lsrcon1n1000} and Figure \ref{fig:lsrcontree256} reflect the similar phenomenons as Figure \ref{fig:lsr1n100020} and Figure \ref{fig:lsrtree100032}. SG and SSG converge faster than AC-SA and when the regularization term is complicated, SSG significantly outperforms the other two algorithms.


\subsection{Regularized Logistic Regression}
Suppose there are $K$ data points $\{(x_i,y_i)\}_{i=1}^K$, where each $x_i\in \mathbb{R}^p$ is the predictor with its Euclidean norm $\|x_i\|=1$ and  $y_i\in\{0,1\}$ is the class label of $x_i$ which indicates that $x_i$ belongs to class $0$ or class $1$. We assume that the posterior probability of the class label of a particular predictor $x$ is given by
\begin{eqnarray}
\label{labprob}
\mbox{Pr}(y=1|x)=\frac{e^{\bb^Tx}}{1+e^{\bb^Tx}}\\
\mbox{Pr}(y=0|x)=\frac{1}{1+e^{\bb^Tx}}
\end{eqnarray}
for some $\bb\in\mathbb{R}^p$. The task of \textsl{logistic regression}  is to find the parameters $\bb\in \mathbb{R}^p$ by minimizing the minus log-likelihood corresponding to this set of data points, which is defined as
\begin{equation*}
f_2(\bb)=-\frac{1}{K}\sum_{i=1}^K\log(\mbox{Pr}(y_i|x_i))=\frac{1}{K}\sum_{i=1}^K\left[\log(1+e^{\bb^Tx_i})-y_i\bb^Tx_i\right].
\end{equation*}

Similar to the regularized linear regression problem, we minimize $f_2(\bb)$ together with a regularization term $\Omega(\bb)$ in order to obtain a sparse solution $\bb$. Hence, the regularized logistic regression can be formulated as
\begin{equation}
\label{rlr}
\min_{\bb}f_2(\bb)+\lambda\Omega(\bb),
\end{equation}
where $\Omega(\bb)$ can also be chosen to be $\Omega_1(\bb)$ or the hierarchical norm $\Omega_2(\bb)$  defined by(\ref{groupnorm}), (\ref{treegroup}) and (\ref{treeg}).

The gradient of $f_2(\bb)$ is the following
\begin{equation*}
    \nabla f_2(\bb)=\frac{1}{K}
\sum_{i=1}^K
\left(\frac{e^{\bb^Tx_i}}{1+e^{\bb^Tx_i}}-y_i\right) x_i.
\end{equation*}
Because each data point satisfies $\|x_i\|=1$, it can be shown that $\nabla f_2(\bb)$ is Lipschitz continuous with a Lipschitz constant $L=1$. Similar to the regularized linear regression problems,  we randomly sample a subset $\{(x_i,y_i)\}_{i\in S}$ with $S\subset\{1,2,\dots,K\}$ from the whole data set and generate the stochastic gradient  $G_2(\bb,S)$  for $f_2(\bb)$ as follows
\begin{equation}
\label{lrsg}
 G_2(\bb,S)=\frac{1}{|S|}
\sum_{i\in S}
\left(\frac{e^{\bb^Tx_i}}{1+e^{\bb^Tx_i}}-y_i\right) x_i.
\end{equation}

Now we apply SG, SSG and AC-SA to problem (\ref{rlr}) with $\Omega(\bb)=\Omega_1(\bb)$. We create a set of artificial data $\{(x_i,y_i)\}_{i=1}^K$ with $K=1000$ and $p=20$ as follows. At first, we choose the real parameter $\hat{\bb}$ to be an all-ones vector in $\mathbb{R}^p$. After that, for each $i=1,\dots,K$, we create a $\hat{x}_i$ by generating each of its component $\hat{x}_{ij}$ from a standard normal distribution $N(0,1)$ independently and we get $x_i$ by normalizing $\hat{x}_i$, i.e., $x_i=\hat{x}_i/\|\hat{x}_i\|$. The corresponding $y_i$ is set to be $1$ or $0$ randomly with the probabilities defined by (\ref{labprob}). We set the sample size $|S|=10$, $\lambda=0.01$ and the number of iterations $N=50000$ in all of the three algorithms. The numerical performances are presented in Figure \ref{fig:lr1n100020}.

\begin{figure}
	\centering
		\includegraphics[height=2.5in,width=3in]{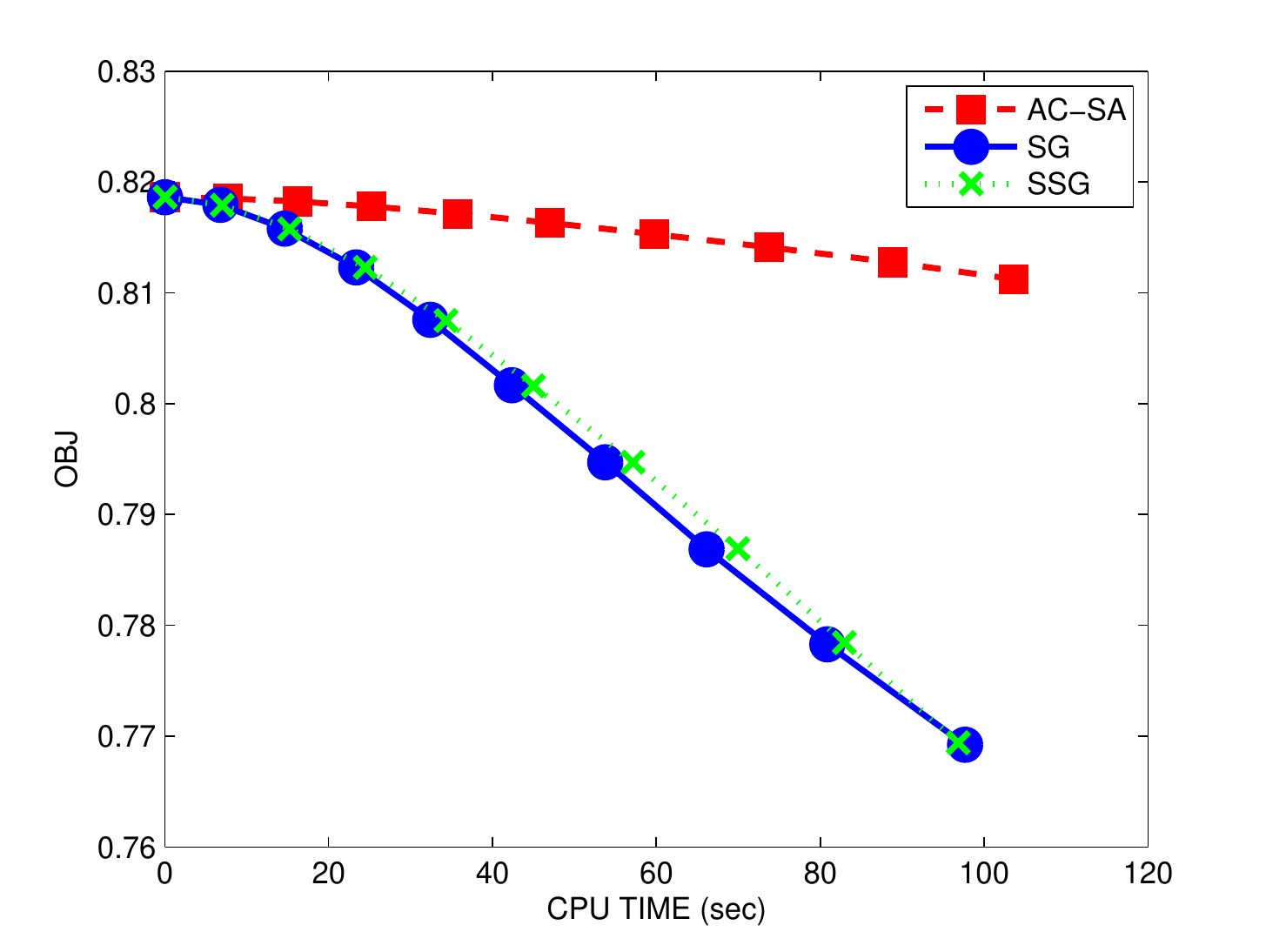}
		\caption{\centering logistic regression with $\ell_1$-norm regularization and $K=1000$, $p=20$}
	\label{fig:lr1n100020}
\end{figure}

For the problem (\ref{rlr}) with $\Omega(\bb)=\Omega_2(\bb)$, we generate the data in the same way as above but with $K=1000$ and $n=5$ ( $p=32$ ). Still, $|S|$, $\lambda$ and $N$ are set to be $10$, $0.01$ and $50000$ respectively. We put the curves in Figure \ref{fig:lrtree100032} to show how the objective values decrease in these algorithms.

\begin{figure}
	\centering
		\includegraphics[height=2.5in,width=3in]{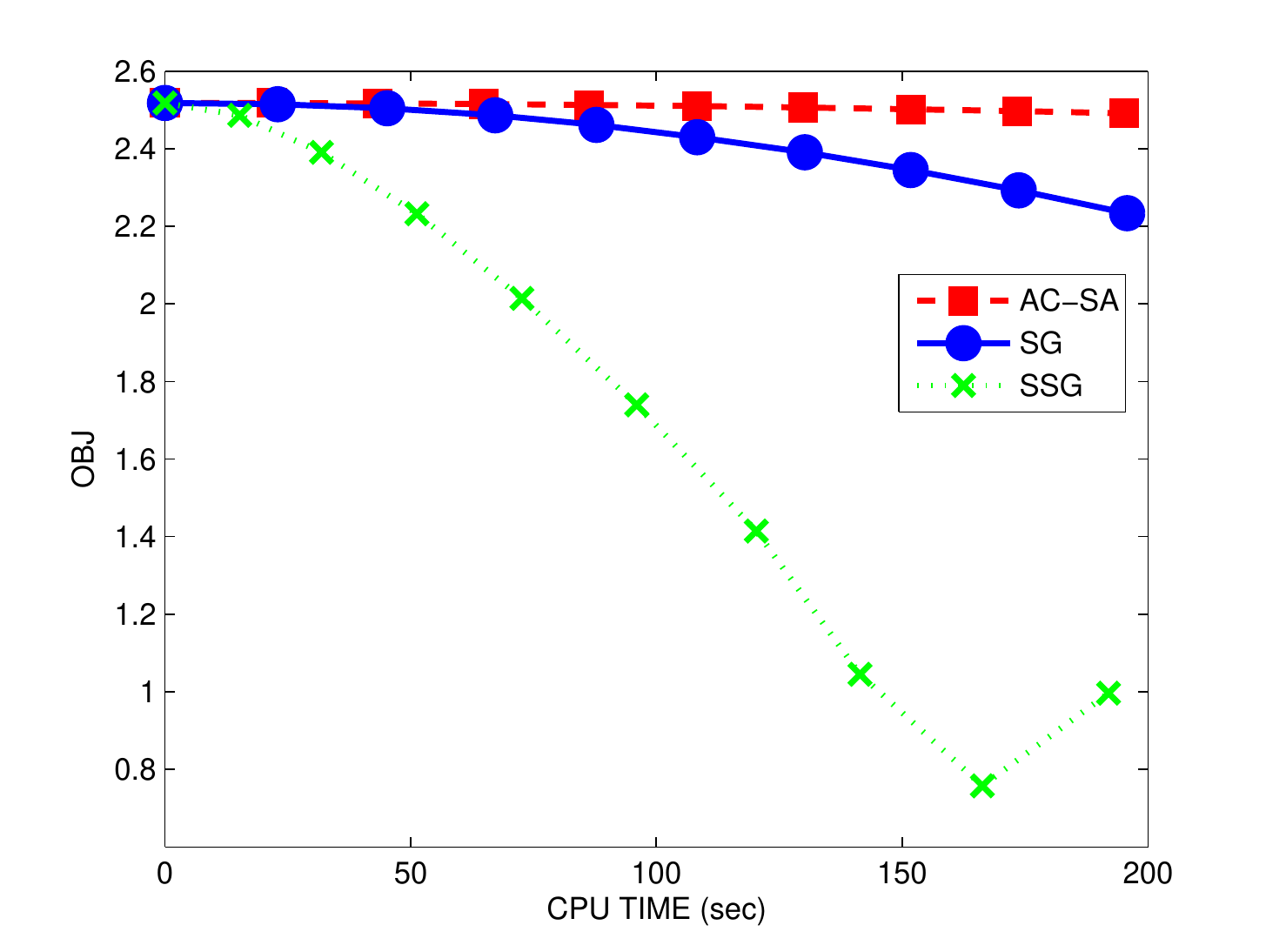}
		\caption{\centering logistic regression with hierarchical norm regularization and $K=1000$, $p=32$}
	\label{fig:lrtree100032}
\end{figure}

The properties of our algorithms shown by Figure \ref{fig:lr1n100020} and Figure \ref{fig:lrtree100032} are very similar to what are shown in Figure 1,2,3,4. In the $\ell_1$-norm regularized logistic regression problems, SG and SSG are more efficient than AC-SA due to the more aggressive choices of the step lengths. In the cases of hierarchical norm regularized logistic regression, SSG is  more efficient than the other two just because SSG has a closed form solution for its projection mapping but SG and AC-SA have to rely on another algorithm as a subroutine to solve their projection mappings.

\section{Conclusion}
In this paper, we consider an optimization problem whose objective function is a composition of a smooth convex function and a non-smooth convex function. We first developed a stochastic gradient descent algorithm for solving this problem. We also proposed another stochastic gradient descent algorithm by smoothing the non-smooth term in the objective function. The convergence rates of these two algorithm are proved.  The results of our numerical experiments demonstrate efficiency and scalability of our algorithms.

\bibliography{summerpaperref}

\bibliographystyle{plain}

\end{document}